\newtheorem{corollary}{Corollary}[section]
\newtheorem{lemma}[corollary]{Lemma}
\newtheorem{proposition}[corollary]{Proposition}
\newtheorem{theorem}[corollary]{Theorem}
\newcommand{\Prob} {{\bf P}}
\newcommand{\Z}{{\mathbb Z}}
\newcommand{\C}{{\mathbb C}}
\newcommand{\Q}{{\bf Q}}
\def \reff#1{(\ref{#1})}
\def \ra  {\rightarrow}
\def \Im {{\rm Im}}
\def \Re {{\rm Re}}
\def \reals {{\mathbb R}}
\def \integers {{\mathbb Z}}
\def \Half {{\mathbb H}}
\def \Halfbar{{\overline{\mathbb{H}}}}
\def \strip{{\cal S}}
\def \walk {{\mathcal W}}
\def \hwalk {{\mathcal H}}
\def \bridge{{\mathcal B}}
\def \ibridge{{\mathcal I}}
\def \acat {{\oplus}}
\def \bcat {{\otimes}}
\def \scaleexp{{\rho}}
\newenvironment{remark}[1][Remark]{\begin{trivlist}
\item[\hskip \labelsep {\bfseries #1}]}{\end{trivlist}}
\newenvironment{remarks}[1][Remarks]{\begin{trivlist}
\item[\hskip \labelsep {\bfseries #1}]}{\end{trivlist}}
\newenvironment{definition}[1][Definition]{\begin{trivlist}
\item[\hskip \labelsep {\bfseries #1}]}{\end{trivlist}}
\newenvironment{definitions}[1][Definitions]{\begin{trivlist}
\item[\hskip \labelsep {\bfseries #1}]}{\end{trivlist}}
\begin{document}

\title{The self-avoiding walk spanning a strip}

\author {
Ben Dyhr \footnote{Metropolitan State College of Denver}
\and
Michael Gilbert \footnote{University of Arizona}
\and 
Tom Kennedy \footnote{ University of Arizona; research supported  
by NSF grant DMS-0758649.}
\and
Gregory F. Lawler\footnote {University of Chicago; 
research supported by NSF grant DMS-0907143.}
\and 
Shane Passon \footnote{University of Arizona}
}

\date{}

\maketitle

\begin{abstract}
We review the existence of the infinite length self-avoiding walk in 
the half plane and its relationship to bridges. We prove that this 
probability measure is also given by the limit as 
$\beta \rightarrow \beta_c-$ 
of the probability measure on all finite length walks $\omega$ 
with the probability of $\omega$ proportional to $\beta^{|\omega|}$ 
where $|\omega|$ is the number of steps in $\omega$. ($\beta_c$ is the 
reciprocal of the connective constant.)
The self-avoiding walk in a strip $\{z : 0<\Im(z)<y\}$ is defined 
by considering all self-avoiding walks $\omega$ in the strip which start 
at the origin and end somewhere on the top boundary with 
probability proportional to $\beta_c^{|\omega|}$ 
We prove that this probability measure may be obtained by conditioning
the SAW in the half plane to have a bridge at height $y$. 
This observation is the basis for simulations to test conjectures
on the distribution of the endpoint of the SAW in a strip and the relationship
between the distribution of this strip SAW and SLE$_{8/3}$.
\end{abstract}

\newpage

\section{Introduction}

Self-avoiding walks (SAW) on the integer lattice give
a simple model of polymers in a dilute solution or
more generally random configurations whose only 
constraint is  given by a self-repulsion.  
In the SAW problem on a lattice, e.g., $\Z^d$, one considers
nearest neighbor walks on the lattice which visit 
each lattice site at most once. 
The basic principle of the standard SAW model is that all walks
of a given length (perhaps constrained to have a 
certain starting point or to stay in a certain subset) are
given the same measure.  
We will consider only $d=2$ in this paper, but the
problem is interesting for all $d \geq 2$; see
\cite{Madras} for an overview.

The classic problem for which
there are few rigorous results is to understand the
asymptotic behavior as $N$ tends to infinity of the uniform
probability measure on the set of SAW's of $N$ steps
starting at the origin.  Among the open questions is 
a proof that the {\em infinite} SAW exists.  The latter
should be the probability measure on infinite SAW's starting
at the origin such that for a finite length SAW $\omega$, 
the probability of the set of infinite walks that start with 
$\omega$ is equal to the limit
as $N \rightarrow \infty$ of the fraction of SAW's of
length $N$ that start with the walk $\omega$.
A different question is to find the scaling limit of
the infinite SAW which should be very closely related to
the scaling limit of the uniform measure.
The scaling limit should be obtained by replacing the lattice
with unit spacing by a lattice with spacing $\delta$ and letting
$\delta$ go to zero.

One can ask a similar question about SAW's restricted
to stay in a half-plane.  In this case, it can be
proved that the infinite half-plane SAW exists.  However,
it is still open to prove that it
has a scaling limit. 

It is believed that the SAW is one of the planar models that
exhibits conformal invariance in the scaling limit.  In
\cite{LSW_SAW}, see also \cite{Lawler_utah}, precise
conjectures were made about the kind of scaling limit.
We state one version of the conjecture now.  Let us write
$\Z^2 = \Z + i \Z$ and let $\strip_y = \{z \in \C: 0 < \Im(z)
 < y\}$.  For each positive integer $y$, consider the set of
SAW's starting at $0$, ending at $\{z: \Im(z) = y\}$ and
otherwise staying in $\strip_y \cap \Z^2$.  To each such
walk $\omega$ we assign weight equal to
$\beta_c^{|\omega|}$.  Here $|\omega|$ denotes the number
of steps on $\omega$ and $\beta_c$ is the inverse of
the connective constant defined roughly by saying that
the number of SAW's of length $n$ in $\Z^2$ starting at the origin  
grows like $\beta_c^{-n}$.  It is important to note that
although walks of the same length get the same measure, walks of different
length are not weighted the same.
If $x,y\in \Z$, let $Z_{\strip_y}(0,x+iy)$ denote the corresponding partition
function, that is, the sum of weights of all the walks in $\strip_y$ that
end at $x+iy$ and let $Z_{\strip_y}(0,\integers+iy) = 
\sum_{x} Z_{\strip_y}(0,x+iy)$.  
(It is not obvious that $Z_{\strip_y}(0,\integers+iy)$ is finite. 
This was proved in \cite{chayes}, and we give a proof in 
section \ref{sect_saw_strip}.)
Let $\Q^y_x$, $\Q^y$
denote the corresponding probability measures on paths obtained
by normalizing by $Z_{\strip_y}(0,x+iy), Z_{\strip_y}(0,\integers+iy),
$ respectively. Then the conjectures are:
\begin{itemize}
\item  There exists a scaling exponent $b$ and a scaling
function $\rho$ such that 
\begin{equation}  \label{intro.1}
       Z_{\strip_y}(0,x+iy) \sim \rho(x/y) \, y^{-2b} , \;\;\;\;
   y \rightarrow \infty . 
\end{equation}
\item  

If $x_y \rightarrow \infty$ such that $x_y/y
 \rightarrow x$ then the measures $\Q^y_{x_y}$ have 
a scaling limit which is a probability measure $\mu_x$
on paths in $\strip_1$ connecting $0$ and $x+i$.

\item  Similarly, the measures $\Q^y$ have a scaling limit
which is a probability measure $\mu^1$
on paths in $\strip_1$ connecting $0$ and some point on the upper 
boundary of $\strip_1$, and which can be written as
\[           \Q^y  \rightarrow \mu^1 = 
c \int_{-\infty}^\infty \rho(x) \, \mu_x \, dx.\]
 
\item  The probability measures $\mu_x$ are invariant under conformal
transformations. For example, if $f: \strip_1 \rightarrow \strip_1$
is a conformal transformation with $f(0) = 0, f(i) = x+i$, then the
image of $\mu_0$ under $f$ is $\mu_x$. (Here probability measures are
considered on paths  modulo reparametrization.)
\end{itemize}

More generally, if one has a simply connected domain $D$ and two
boundary points $z,w$, one expects to get a probability measure
$\mu_D(z,w)$ in the scaling limit, 
and that these measures will be conformally
invariant. In general, lattice corrections that persist
in the scaling limit  will require a
more complicated form for the partition function than seen
in \eqref{intro.1}, but we will not worry about this here.   

The conjecture \eqref{intro.1} suggests that we should have the 
following scaling rule for the function $\rho$. 
If $f:\strip_1 \rightarrow \strip_1$ is a conformal 
transformation with $f(0) = 0, f(i) = x+i$, then
\begin{equation}  \label{intro.2}
              \rho(0) = |f'(0)|^b \, |f'(i)|^b \, \rho(x). 
\end{equation}
We give a heuristic argument for \reff{intro.2} in section
\ref{sect_sim_part_func}.

In \cite{LSW_SAW}, it was shown that if the infinite half-plane
SAW has a conformally invariant limit then the limit must
be the chordal Schramm-Loewner evolution SLE with exponent
$\kappa = 8/3$.  Using this  and rigorous results about SLE
\cite{LSW_restriction}, predictions for critical exponents for
the SAW were rederived.  In particular, one predicts
\[       b = \frac 58 .\]
Because these predictions rely on the
assumption of the conformally invariant limit, they are 
not rigorous derivations.  This conjecture also led to new 
predictions about the behavior of SAW, e.g., the probability that
a half-plane SAW hits a slit emanating from the real line.
These new conjectures were tested numerically in \cite{Kennedya,Kennedyb},
and the simulations were very consistent with the predictions.
These simulations give more evidence to believe the assumption
of a conformally invariant limit.  The work in \cite{Kennedya,Kennedyb}
sampled from the infinite half-plane SAW (more precisely, it sampled
from the uniform measure on very long half-plane walks and then
restricted the measure to a smaller initial segment of the path), and hence
did not give a test of the scaling conjecture for a bounded domain.

The finite scaling rule is difficult to test for two reasons:
\begin{itemize}
\item  The pivot algorithm, which is the fastest method for producing
SAW's, chooses walks from the uniform measure on walks of a fixed number
of steps (perhaps restricted to the half-plane).  It seems much
harder to simulate quickly from the measure where paths are weighted
by $\beta_c^{|\omega|}.$
\item  Unless the boundary of our domain consists of straight lines
parallel to the coordinate axes, the lattice corrections in the scaling
rule \eqref{intro.1} are very difficult to understand.
\end{itemize}

In this paper, we test the conjectures by considering walks in
$\strip_y \cap \Z^2$.  The limit domain $\strip_1$ has only horizontal
edges, so one does not expect problems in the lattice corrections.  Also,
as we will show in section \ref{sect_inf_half}, sampling from $\Q^y$
is the same as sampling from the infinite half-plane SAW and conditioning
on having a ``bridge'' or ``horizontal cut'' at $y$.  Our work was
motivated by \cite{albert_hugo} who considered bridges of $SLE_{8/3}$.
Using the prediction $b = 5/8$ and the scaling rule \eqref{intro.2}, we
will see in section \ref{sect_sim_part_func}
that the predicted scaling limit for $\Q^y$ is
\[               \mu^1
=   c \, \int_{-\infty}^\infty \rho(x) \, \mu_x \, dx, \]
where
\[   \rho(x) \propto  \left[\cosh
 \left(\frac \pi 2 x\right) \right]^{-5/4}, \]
and $\mu_x$ is the probability measure associated to chordal
$SLE_{8/3}$ from $0$ to $x+i$ in $\strip_1$.  For the scaling limit
of $\Q^y$ we consider three main random variables: the distribution of
the real part of the endpoint (which should have density $c \,\rho(x)$
in the limit), the distribution of the rightmost excursion of the path,  
and the probability that the path passes to the left of a fixed
$z \in \strip_1$. The last probability was first computed for
each $\mu_x$ by Schramm \cite{Schramm2}
and one obtains it for $\mu^1$ by integrating.  We are very
pleased to find our simulations give strong numerical evidence for the
finite scaling conjectures.

This paper is divided into two parts.  The next section is theoretical
and establishes mathematically the relationship between the infinite
half-plane SAW and the measure on paths weighted by $\beta_c^{|\omega|}$.
We decided to make this section self-contained except for one difficult
estimate of Kesten.  In particular, the existence of measures on
infinite bridges and half-plane SAW's in \cite{Madras, LSW_SAW} are reproved.
We include a discussion of half-plane walks started at interior points
although we do not do simulations from this measure.
In the last section we give a heuristic argument for 
\reff{intro.2}, and then discuss the results of the simulations in detail.

\section{Infinite half-plane SAW}
\label{sect_inf_half}

\subsection{Half-plane SAW starting at the origin}

In this section we review some facts about
the infinite self-avoiding half-plane walk.
The construction of the measure is due to Madras
and Slade \cite{Madras} who called it the infinite bridge. In
\cite{LSW_SAW} it was shown  that this measure
is in fact the infinite half-plane SAW.  Since
it is not too difficult, we will give
proofs of the results relying only on a
difficult estimate  due to  Kesten.  

We write a self-avoiding walk (SAW) in $\Z^2
= \Z \times i \Z$  as
\[  \omega = [\omega_0,\ldots,\omega_n].\]
We write $|\omega| = n$ for the number of steps
of $\omega$.  We include the trivial $0$-step
walks.  Let $\walk$ denote the set of SAW's in
$\Z^2$ and $\walk_0$ the set of $\omega \in \walk$
with $\omega_0 = 0$. If $C_n = \#\{\omega \in
\walk_0: |\omega| = n\}$, then the connective constant,
which we will denote by $1/\beta_c$ is defined by
\[           C_n \approx  (1/\beta_c)^n , \;\;\;\;
  n \rightarrow \infty . \]
Equivalently, $\beta_c$ is the radius of convergence
of the power series
\[             \sum_{\omega \in \walk_0}
    \beta^{|\omega|} = \sum_{n=0}^\infty C_n \,
  \beta^n . \]
The existence of the connective constant is easily
established from the subadditivity relation
$C_{n+m} \leq C_n \, C_m$ from which one can also
conclude that $C_n \geq \beta_c^{-n}$. The value
of the constant is not known; we will only use the
simple estimate $0 < \beta_c < 1$.

We will define two different, but related, notions
of concatenation of paths.  If $\omega^1 = [\omega^1_0,
\ldots,\omega^1_m]$ and $\omega^2 = [\omega^2_0,\ldots,
\omega^2_n]$ are two SAW's with $\omega^1_m = \omega^2_0$
we define $\omega^1 \acat \omega^2$ to be the
$(m+n)$-step walk
\[   \omega^1 \acat \omega^2 =
          [\omega^1_0,\ldots,\omega^1_m,\omega^2_1,\ldots,
\omega^2_n]. \]
The concatenation $\omega^1 \acat \omega^2$ is defined only
when the terminal point of $\omega^1$ is the same
as the initial point of $\omega^2$.  If $\omega^2 \in
\walk_0$ we define $\omega^1 \bcat \omega^2$ to be
$\omega^1 \acat (\omega^1_m + \omega^2)$, i.e., 
\[ \omega^1 \bcat \omega^2 = [\omega^1_0,\ldots,\omega^1_m,
   \omega^1_m + \omega^2_1 ,\ldots,\omega^1_m + \omega^2_n]. \]
We similarly define $\omega^1 \acat \cdots \acat \omega^k$,
$\omega^1 \bcat \cdots \bcat \omega^k$. 
 We write $\omega^1 \prec \omega$
if we can write $\omega = \omega^1 \acat \omega^2$ for
some $\omega^2 \in \walk$.

\begin{definitions}
$\;$

\begin{itemize}  

\item A {\em (upper) half-plane SAW
(starting at the origin)} is a walk
$\omega = [\omega_0=0,\ldots,\omega_n] \in \walk_0$ with
$\Im[\omega_j] > 0$ for $j > 0$.  We let $\hwalk$ denote the
set of half-plane walks. The {\em height} of a walk $\omega \in
\hwalk$ is defined to be the maximal value of $\Im[\omega_j],
j=0,\ldots,n$.   The trivial walk $\omega = [0]$ is
the unique $\omega \in \hwalk$ with $h(\omega) = 0$. 
$\hwalk^*$ denotes the set of walks $\omega \in \hwalk$ with 
$h(\omega) \geq 1$, i.e., the nontrivial walks in $\hwalk$.

\item  A {\em weakly half-plane SAW} is a walk
$\omega = [\omega_0=0,\ldots,\omega_n] \in \walk_0$ with
$\Im[\omega_j] \geq 0$ for $j > 0$.  We let
$\tilde \hwalk$ denote the set of weakly half-plane
walks. There is a natural bijection
between $\tilde \hwalk$ and $\hwalk^*$ given by
\[     \omega \longleftrightarrow [0,i] \bcat
  \omega . \]

\item A {\em bridge} is a walk $\omega =[\omega_0=0,\ldots,
\omega_n] \in \hwalk$ with $\Im[\omega_n] =
h(\omega)$. The trivial walk $[0]$ is a bridge.
We let $\bridge$ denote the set of bridges and
$\bridge^*$   the set of   bridges $\omega$
with $h(\omega) \geq 1$, i.e., the nontrivial bridges.

\item An {\em irreducible bridge}  is a bridge
$\omega  \in \bridge$ with $h(\omega) \geq 1$ such that
$\omega$ cannot be written as
\[            \omega = \omega^1 \bcat \omega^2 \]
with $\omega^1,\omega^2 \in \bridge^*$.  We let $\ibridge$
denote the set of irreducible bridges.

\item More generally, let $\ibridge_k$ denote the
set of bridges $\omega$ with $h(\omega) \geq
k$  such that $\omega$
cannot be written as
\[        \omega = \omega^1 \bcat \omega^2 , \]
with $\omega^1,\omega^2 \in \bridge^*$ and
$h(\omega^1) \geq k$.  In other words, $\ibridge_k$ is
the set of bridges that are ``irreducible above
height $k$''. Note that $\ibridge =
\ibridge_1$.

\end{itemize}

\end{definitions}

An important observation is that every bridge $\omega \in \bridge^*$
can be written uniquely as
\begin{equation}  \label{renew}
           \omega = \omega^1 \bcat \omega^2, \;\;\;\;
  \omega^1 \in \ibridge, \;\; \omega^2 \in \bridge . 
\end{equation} 
The bridge $\omega$ is irreducible if and only if $\omega^2$
is the trivial bridge.  More generally, if $h(\omega) \geq k$,
then $\omega$ can also be written uniquely as
\begin{equation}  \label{mar18.1}
        \omega = \omega^3 \bcat \omega^4 ,
\;\;\;\;  \omega^3 \in \ibridge_k, \;\; \omega^4 \in \bridge.
\end{equation}

We define the generating functions
\[        W(\beta) = \sum_{\omega \in  \walk_0} \beta^{|\omega|},
\;\;\;\;
       H(\beta) = \sum_{\omega \in \hwalk} \beta^{|\omega|}, \]
\[      \tilde H(\beta) =  \sum_{\omega \in \tilde \hwalk}
   \beta^{|\omega|}, \;\;\;\;
 B(\beta) = \sum_{\omega \in \bridge}
   \beta^{|\omega|}, \]
\[          I_k(\beta) = \sum_{\omega \in \ibridge_k}
      \beta^{|\omega|} , \;\;\;\;  I(\beta) = I_1(\beta). \]
The generating functions are increasing in $\beta$ for
$\beta > 0$ and for such $\beta$,
 $I_k(\beta) \leq B(\beta) \leq H(\beta) \leq W(\beta)$.
The bijection between $\tilde \hwalk$ and $\hwalk^*$ implies
\begin{equation}  \label{mar19.2}
       H(\beta) = 1 + \beta \, \tilde H(\beta). 
\end{equation}
Also, eq. \eqref{renew} gives
 \begin{equation}  \label{mar19.3}
  B(\beta) = 1 + I(\beta) \, B(\beta).
\end{equation} 
Recall that $\beta_c$ is the radius of convergence of
$W(\beta)$.  The construction of the measure
relies on the following  fact due to Kesten \cite{kestena}.

\begin{proposition} \label{kesten_prob}
 $B(\beta_c) = \infty $ and hence
 $I(\beta_c) = 1$.
\end{proposition}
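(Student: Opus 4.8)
The plan is to reduce the stated ``hence'' to the single assertion $B(\beta_c)=\infty$, and then to obtain that assertion by pushing the divergence of $W$ at $\beta_c$ down onto the bridges. For the reduction, observe that for every $\beta\in(0,\beta_c)$ one has $B(\beta)\le W(\beta)<\infty$, since $\beta_c$ is the radius of convergence of $W$. As $B(\beta)\ge 1$ is finite, the renewal identity \eqref{mar19.3} rearranges to $I(\beta)=1-1/B(\beta)\in[0,1)$ and $B(\beta)=1/(1-I(\beta))$. Both $B$ and $I$ are power series in $\beta$ with nonnegative coefficients, so by monotone convergence $B(\beta)\uparrow B(\beta_c)$ and $I(\beta)\uparrow I(\beta_c)$ as $\beta\uparrow\beta_c$. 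Hence $I(\beta_c)=\lim_{\beta\uparrow\beta_c}\bigl(1-1/B(\beta)\bigr)$, and once $B(\beta_c)=\infty$ is known this limit equals $1$.

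The starting point is that $W$ already diverges at $\beta_c$: using $C_n\ge\beta_c^{-n}$ one gets $W(\beta_c)=\sum_n C_n\beta_c^n\ge\sum_n 1=\infty$. It remains to show that this divergence is inherited by the bridge generating function $B$.

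I would do this in two steps. First I would decompose an arbitrary $\omega\in\walk_0$ at its first vertex $\omega_{j^*}$ of minimal imaginary part: reversing the segment $[\omega_0,\dots,\omega_{j^*}]$ and translating $\omega_{j^*}$ to the origin yields a walk in $\hwalk$ (every earlier vertex sits strictly higher, as $j^*$ is the first minimizer), while $[\omega_{j^*},\dots,\omega_n]$ translated to the origin lies in $\tilde\hwalk$. This map is injective and additive in length, so $W(\beta)\le H(\beta)\,\tilde H(\beta)$; since $\tilde H$ is finite exactly when $H$ is by \eqref{mar19.2}, the divergence $W(\beta_c)=\infty$ gives $H(\beta_c)=\infty$. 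Second, and this is the crux, I would invoke the Hammersley--Welsh factorization: every half-plane walk can be written uniquely as a concatenation $\omega^{(1)}\bcat\cdots\bcat\omega^{(k)}$ of bridges with strictly decreasing heights $h(\omega^{(1)})>\cdots>h(\omega^{(k)})$ (the pieces taken in alternating orientation). Because the heights are distinct, enlarging the sum to all such bridge-sequences, ignoring whether the concatenation stays in the half-plane, only increases it, and summing yields
\[ H(\beta)\ \le\ \prod_{s\ge 1}\bigl(1+B_{(s)}(\beta)\bigr)\ \le\ \exp\Bigl(\sum_{s\ge 1}B_{(s)}(\beta)\Bigr)\ =\ e^{\,B(\beta)-1}, \]
where $B_{(s)}$ collects the bridges of height $s$, so that $\sum_{s\ge 1}B_{(s)}=B-1$. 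Letting $\beta\uparrow\beta_c$, the already established $H(\beta_c)=\infty$ forces $B(\beta_c)=\infty$, which completes the proof.

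The main obstacle is precisely this second step. The soft bound $B\le W$ only shows that $B$ and $W$ share the radius of convergence $\beta_c$, which says nothing about the boundary value $B(\beta_c)$; and a merely subexponential comparison of the form $C_n\le e^{O(\sqrt n)}b_n$ would also fail here, since $\sum_n e^{-O(\sqrt n)}$ converges. What saves the argument is the \emph{exact} product structure coming from the uniqueness of the strictly-decreasing-height factorization, which upgrades the estimate to the exponential form $H\le e^{B-1}$. Establishing that factorization and its uniqueness, the classical Hammersley--Welsh input, is the one genuinely nontrivial ingredient; everything else, the reduction to $B(\beta_c)=\infty$, the minimum-vertex splitting, and the passage to the limit by monotone convergence, is routine.
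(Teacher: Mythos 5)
Your proposal is correct and takes essentially the same route as the paper's own proof: both deduce $W(\beta_c)=\infty$ from $C_n\ge\beta_c^{-n}$, split a generic walk at a vertex of minimal imaginary part to get $W(\beta)\le H(\beta)\,\tilde H(\beta)$ (the paper splits at the last minimizer, you at the first, which is an immaterial difference), invoke the Hammersley--Welsh unique factorization into bridges of strictly decreasing heights to obtain $H(\beta)\le e^{B(\beta)}$ and hence $B(\beta_c)=\infty$, and finally read off $I(\beta_c)=1$ from the renewal identity \eqref{mar19.3}. Your treatment of the limit $\beta\uparrow\beta_c$ via monotone convergence of the nonnegative power series is slightly more careful than the paper's one-line appeal to $B(\beta)=1/(1-I(\beta))$, but the argument is the same.
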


\begin{proof} 
We follow the argument in \cite{Madras}
to prove $B(\beta_c) =
\infty$.
Since  $C_n \geq
(1/\beta_c)^n$,  $W(\beta_c) = \infty$.
By splitting a walk $\omega \in \walk_0$ at the
last point at which the walk achieves its minimal
imaginary part, we can see that
\[         W(\beta) \leq \tilde H(\beta) \,  H(\beta).\]
Using \eqref{mar19.2}, this gives $H(\beta_c) =
 \infty$.
Similarly, each
 $\omega \in \hwalk$ with $h(\omega) \geq 1$
can be written uniquely as
\[  \omega = \omega^1 \bcat (-\omega^2) \bcat \omega^3
     \bcat \cdots (-1)^{k-1} \omega^k , \]
where $\omega^1,\omega^2,\ldots,\omega^k \in \bridge$ with
$h(\omega^1) > h(\omega^2) > \cdots > h(\omega^k)$. (For
example, $\omega^1=[\omega_0,\ldots,\omega_j]$ where
$j$ is the largest index with $\Im[\omega_j] =
h(\omega).$)
Let 
\[     B_k(\beta) = \sum_{\omega \in \bridge,
h(\omega) = k}
              \beta^{|\omega|}.\]
Then,
\[             H(\beta) \leq \prod_{k=1}^\infty
      [1+B_k(\beta)]  \leq \exp
  \left\{\sum_{k=0}^\infty B_k(\beta)\right\}
  = e^{B(\beta)}, \;\;\;\; \beta < \beta_c. \]
Hence $B(\beta_c) = \infty$. 
The second assertion of the proposition
follows from \eqref{mar19.3} which
implies that 
\[        B(\beta) = \frac{1}{1 - I(\beta)}. \]
\end{proof}

This proposition justifies the next definition.
In the introduction we used boldface $\Q$'s to denote probability
measures on SAW's in a strip. 
$\Q^y_x$ was a probability measure on SAW's that end at a fixed point on the 
upper boundary, and $\Q^y$ was a probability measure on SAW's that end
anywhere on the upper boundary.
In the following definition and subsequent definitions
we use non-boldface $Q$'s with subscripts and superscripts to denote
probability measures that are defined on SAW's in the upper half-plane. 

\begin{definitions} $\;$

\begin{itemize}

\item  Let $Q$ denote the probability measure on
$\ibridge$ defined by   
\[          Q(\omega) = \beta_c^{|\omega|}, \;\;\;\;
   \omega \in \ibridge \]

\item  If $j \geq 1$, we consider the product space $\ibridge^j$
and define the probability measure $Q^j $ by  product measure. 
We also write $Q^j$ for the extension
to    $\hwalk$ with $Q^j[\hwalk \setminus
\ibridge^j] = 0$ 
and  for
the corresponding probability measure on $\bridge$
given by
\[   Q^j(\omega^1 \bcat \cdots \bcat \omega^j)
        = Q(\omega^1) \cdots Q(\omega^j) , \;\;\;\;
   \omega^1,\ldots,\omega^j \in \ibridge. \]
Here  $Q^j(\omega) = 0$ if $\omega \in
\bridge$ is not of the form 
$\omega^1 \bcat \cdots \bcat \omega^j, $  $
  \omega^1,\ldots,\omega^j \in \ibridge. $

\item 
  We define  $Q^\infty$ on $\ibridge^\infty =
\ibridge \times \ibridge \times \cdots$,
which can be considered as a measure on infinite paths.
 The {\em infinite half-plane SAW} is
the measure on infinite self-avoiding paths
induced by $Q^\infty$.
\end{itemize}

\end{definitions}

We have taken this to be the definition.  Perhaps we should
have defined this to be the infinite bridge as was done by
Madras and Slade, 
but we will show that this definition is equivalent
to other definitions that are more naturally called 
infinite half-plane SAW.  There are two natural ways
to define an infinite half-plane SAW: either as the limit
as $\beta \rightarrow \beta_c-$ of walks $\omega \in 
\hwalk$ weighted by $\beta^{|\omega|}$ or the limit as
$n \rightarrow \infty$ of the uniform measure on walks
$\omega \in \hwalk$ with $|\omega| = n$.  We consider
the first of these in this section  and discuss the
second in Section \ref{unifsec}.

\begin{definition}  If $\beta < \beta_c$, $\Prob_\beta$
denotes the probability measure on $\hwalk$ given by
\[   \Prob_{\beta}(\omega) = \frac{\beta^{|\omega|}}
                    {H(\beta)}, \;\;\;\;
  \omega \in\hwalk. \]
\end{definition}

The next proposition shows that in some sense the
limit of $\Prob_\beta$ as $\beta \rightarrow \beta_c-$
is the infinite half-plane SAW.

\begin{proposition} \label{mar19.prop1}
 Suppose $\omega^1,\ldots,\omega^j \in
\ibridge$.  Let $\hwalk(\omega^1,\ldots,\omega^j)$ denote
the set of $\omega \in \hwalk$ of the form
\begin{equation}  \label{form}
     \omega = \omega^1 \bcat \cdots \bcat
   \omega^j \bcat \tilde \omega , 
\end{equation}
with $\tilde \omega \in \hwalk$.  Then
if $\beta < \beta_c$,
\begin{equation}  \label{mar18.2}
     \Prob_\beta[\hwalk(\omega^1,\ldots,
   \omega^j) ] = \beta^{|\omega^1| + \cdots +
   |\omega^j|}  . 
\end{equation}
In particular,
\[  \lim_{\beta \rightarrow \beta_c-}
  \Prob_\beta[\hwalk(\omega^1,\ldots,
   \omega^j) ]
 = \beta_c^{|\omega^1| + \cdots +
   |\omega^j|}  = Q^j(\omega^1 \bcat
  \cdots \bcat \omega^j) . \]
\end{proposition}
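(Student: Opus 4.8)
The plan is to show that the event $\hwalk(\omega^1,\ldots,\omega^j)$ is, up to the normalizing constant $H(\beta)$, nothing but a translated copy of all of $\hwalk$, so that its $\Prob_\beta$-mass telescopes to $\beta^{|\omega^1|+\cdots+|\omega^j|}$. Write $\hat\omega = \omega^1 \bcat \cdots \bcat \omega^j$, let $m=|\hat\omega| = \sum_i|\omega^i|$, and define the map $\Phi(\tilde\omega) = \hat\omega \bcat \tilde\omega$ for $\tilde\omega \in \hwalk$. First I would verify that $\Phi$ is a bijection from $\hwalk$ onto $\hwalk(\omega^1,\ldots,\omega^j)$. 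Surjectivity is immediate from the very definition of $\hwalk(\omega^1,\ldots,\omega^j)$ in \eqref{form}, and injectivity holds because $\hat\omega$ has the fixed number of steps $m$, so $\tilde\omega$ is recovered as the translated tail of $\omega$ after step $m$.

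The one point that genuinely needs an argument is that $\Phi$ is \emph{well defined}, i.e.\ that $\hat\omega \bcat \tilde\omega$ is always a self-avoiding walk lying in $\hwalk$, for every $\tilde\omega \in \hwalk$. This is the height-separation observation already implicit in \eqref{renew}: since each $\omega^i \in \ibridge \subset \bridge$ is a bridge, $\hat\omega$ is a bridge all of whose points have imaginary part at most $H := \sum_i h(\omega^i)$, with its terminal point $\hat\omega_m$ at height exactly $H$; meanwhile the translate $\hat\omega_m + \tilde\omega$ has every point after the first at imaginary part strictly greater than $H$, because $\Im[\tilde\omega_k] > 0$ for $k>0$. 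Hence the two pieces meet only at the shared vertex $\hat\omega_m$, the concatenation is self-avoiding, and since $H \geq j \geq 1 > 0$ it lies in $\hwalk$. The same argument applied successively across $\omega^1,\ldots,\omega^j$ also confirms that $\hat\omega$ is itself a legitimate bridge.

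With the bijection in hand the computation is routine bookkeeping. Since $|\Phi(\tilde\omega)| = m + |\tilde\omega| = \sum_i |\omega^i| + |\tilde\omega|$, and $H(\beta)$ is finite and nonzero for $\beta<\beta_c$ (because $H(\beta)\leq W(\beta)$ and $\beta$ lies inside the radius of convergence), all sums below are absolutely convergent sums of positive terms and may be rearranged freely:
\[
 \Prob_\beta[\hwalk(\omega^1,\ldots,\omega^j)]
 = \frac{1}{H(\beta)} \sum_{\tilde\omega \in \hwalk} \beta^{\,m + |\tilde\omega|}
 = \beta^{\,m}\,\frac{1}{H(\beta)} \sum_{\tilde\omega \in \hwalk} \beta^{|\tilde\omega|}
 = \beta^{|\omega^1| + \cdots + |\omega^j|},
\]
using $\sum_{\tilde\omega \in \hwalk} \beta^{|\tilde\omega|} = H(\beta)$. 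This is \eqref{mar18.2}.

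Finally, letting $\beta \rightarrow \beta_c-$ gives $\beta^{m} \to \beta_c^{\,m}$, and by the definition of $Q$ on $\ibridge$ together with the product structure of $Q^j$ we have $Q^j(\omega^1 \bcat \cdots \bcat \omega^j) = \prod_i Q(\omega^i) = \prod_i \beta_c^{|\omega^i|} = \beta_c^{|\omega^1|+\cdots+|\omega^j|}$, which matches the limit and completes the proof. I expect the only real obstacle to be the self-avoidance verification of the second paragraph; once the height separation is recorded, everything else reduces to the geometric factorization and the finiteness of $H(\beta)$ for $\beta<\beta_c$.
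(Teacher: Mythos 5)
Your proof is correct and follows essentially the same route as the paper: the paper's entire proof is the factorization identity $\sum_{\omega \in \hwalk(\omega^1,\ldots,\omega^j)} \beta^{|\omega|} = \beta^{|\omega^1|+\cdots+|\omega^j|}\, H(\beta)$, stated without further justification, with the rest "following from the definitions." What you add is the explicit bijection $\tilde\omega \mapsto \omega^1 \bcat \cdots \bcat \omega^j \bcat \tilde\omega$ and the height-separation argument showing the concatenation is self-avoiding, which is exactly the verification the paper leaves implicit.
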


\begin{remarks}  $\;$ 

\begin{itemize}
\item  $\Prob_\beta$ is a probability measure on finite length
walks in $\Half$, while $Q^\infty$ is a measure on infinite length
walks in $\Half$. The precise sense in which $\Prob_\beta$ converges to 
$Q^\infty$ is the following. In the proposition 
$\hwalk(\omega^1,\ldots,\omega^j)$ denotes the set of finite length
walks that start with $\omega^1 \bcat \cdots \bcat \omega^j$. If we let 
$\hwalk^\infty(\omega^1,\ldots,\omega^j)$ denote the set of infinite 
length walks that start with 
$\omega^1 \bcat \cdots \bcat \omega^j$, 
then 
\begin{equation}
Q^\infty[\hwalk^\infty(\omega^1,\ldots,\omega^j)]=
\beta_c^{|\omega^1| + \cdots + |\omega^j|} . 
\end{equation}
So the proposition says that 
\begin{equation}
 \lim_{\beta \rightarrow \beta_c-}
  \Prob_\beta[\hwalk(\omega^1,\ldots,
   \omega^j) ] =
 Q^\infty[\hwalk^\infty(\omega^1,\ldots,\omega^j)]
\end{equation}

\item  Since $Q^j$ is a probability measure,
a corollary of this proposition is the following.  For
each $j$ let $A_j$ denote the set of walks $\omega
 \in \hwalk$ that are {\em not} of the form
\[          \omega = \omega^1 \bcat \cdots \bcat
   \omega^j \bcat \tilde \omega, \;\;\;\;
   \omega^1,\ldots,\omega^j \in \ibridge, \;\;
  \tilde \omega \in \hwalk. \]
Then
\[   \Prob_\beta(A_j) = 1 - \sum_{\omega^1,\ldots,
  \omega^j \in \ibridge} \beta^{|\omega^1| +
 \cdots + |\omega^j|} , \]
and hence 
\begin{equation} \label{shaneedit1}  \lim_{\beta \rightarrow \beta_c-}
              \Prob_\beta(A_j) = 0 . \end{equation}
\end{itemize}

\end{remarks}

\begin{proof} Equation \eqref{mar18.2} follows
  from
\[     \sum_{\omega \in \hwalk(\omega^1,\ldots,
   \omega^j) } \beta^{|\omega|}
 = \beta^{|\omega^1| + \cdots +
   |\omega^j|}  \, H(\beta), \]
and the rest follows immediately from the definitions.
\end{proof}

%%%%%%%%%%%%%%%%%%%%
\begin{comment}
Let 
\[    \hwalk^j = \bigcup_{\omega^1,\ldots,\omega^j
   \in \ibridge}
              \hwalk(\omega^1,\ldots,\omega^j). \]
It follows from the proposition that
\begin{equation}  \label{mar18.3}
       \Prob_\beta\left[\hwalk^j\right]
   = S(\beta)^j , \;\;\;\;\; \Prob_{\beta_c}\left[\hwalk^j\right]
=1.
\end{equation}
\end{comment}
%%%%%%%%%%%%%%%%%%%%%%%%%

\begin{definition}
If $\omega \in \walk$, we say that $\omega$
has a {\em (horizontal) cut at level $k$}, if
we can write
\[          \omega = \omega^1 \acat
  [j +ki, j + (k+1) i] \acat \omega^2, \]
with 
   $j \in \Z,  \omega^1,\omega^2 \in \walk$
and
$\omega^1 \subset \{x+iy: y \leq k\}$,
$\omega^2 \subset \{x+iy: y \geq k+1\}$. 
Let $\hat \walk^k$ denote the set of walks that
have a cut at level $l$ for some $l \geq k$.
\end{definition}

Note that if $j > k$ and $\omega$ is of the form
\eqref{form}, then $\omega \in \hat \walk^k$.
Since $\hwalk\setminus \hat{ \mathcal{W}^k}\subset A_k$, 
equation \reff{shaneedit1} implies for every $k \geq 1$, 
\begin{equation}  \label{mar19.8}
   \lim_{\beta \rightarrow \beta_c-}
   \Prob_\beta(\hwalk \cap \hat \walk^k) = 1 . 
\end{equation}
Also, every $\omega \in \hwalk \cap \hat \walk^k$ 
can be written uniquely as
\[  \omega = \omega^1 \otimes \omega^2, \;\;\;\;
             \omega^1 \in \ibridge_k, \;\;
  \omega^2 \in \hwalk^*. \]
This justifies the next definition.

\begin{definition}
If $k$ is a positive integer, let $Q_k$ denote
the probability measure on $\ibridge_k$ defined by 
\[    Q_k(\omega) = \beta_c^{|\omega|}, \;\;\;\;
   \omega \in \ibridge_k. \]
\end{definition}

We also write $Q_k$ for the extension to $\hwalk$ with 
$Q_k[\hwalk\setminus\ibridge_k]=0$. 
It may not be immediately obvious that this is a probability 
measure. Recall that $\ibridge_k$ is the set of bridges which 
are irreducible above height $k$. In other words, they are 
of the form $\omega^1 \bcat \omega^2 \bcat \cdots \bcat \omega^j$
for some $j$ with $\omega^i \in \ibridge$,
$h(\omega^1 \bcat \omega^2 \cdots \bcat \omega^{j-1})<k$,
and $h(\omega^1 \bcat \omega^2 \cdots \bcat \omega^j) \ge k$.
So we can think of the probability measure $Q_k$ as follows. 
We generate a sequence of i.i.d. irreducible bridges 
distributed according to the probability measure $Q$ and stop 
when the height of their concatenation is at least $k$. 
 
Note that we have defined two different but similar
probability measures $Q^k$ and $Q_k$.  $Q^k$ is
a measure on $\ibridge^k$, and $Q_k$ is a measure on
$\ibridge_k$.  The next proposition is immediate
from what we have done so far; it holds for
either of these measures.

\begin{proposition}  
\label{equiv_ensemble}
Suppose $\omega \in \hwalk$ with
$h(\omega) \leq j$. 
Then
\[   \Prob_{\beta_c}(\{\hat \omega  :\omega \prec \hat \omega\}):=
 \lim_{\beta \rightarrow \beta_c-}
    \Prob_{\beta}(\{\hat \omega  :\omega \prec \hat \omega\}) 
  = Q^j(\{\hat \omega  :\omega \prec \hat \omega\})
  = Q_j(\{\hat \omega:  \omega \prec \hat \omega \}).\]
\end{proposition}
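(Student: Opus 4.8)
The plan is to prove the two equalities in turn, both of which reduce to a single elementary observation that uses the hypothesis $h(\omega)\le j$. Fix $\omega\in\hwalk$ with $h(\omega)\le j$ and write $E=\{\hat\omega:\omega\prec\hat\omega\}$ for the cylinder it determines. The key observation is: if $\hat\omega=\hat\omega^1\bcat\hat\omega^2$, where $\hat\omega^1$ is a bridge with $h(\hat\omega^1)\ge j$ and $\hat\omega^2\in\hwalk$ is any half-plane walk attached at its top, then $\omega\prec\hat\omega$ if and only if $\omega\prec\hat\omega^1$. This is immediate when $\hat\omega^2$ is trivial; otherwise $(\Leftarrow)$ is clear, and for $(\Rightarrow)$ note that were $|\omega|>|\hat\omega^1|$, the vertex $\omega_{|\hat\omega^1|+1}=\hat\omega_{|\hat\omega^1|+1}$ would be the second vertex of $\hat\omega^2$ and hence have imaginary part strictly greater than $h(\hat\omega^1)\ge j\ge h(\omega)$, contradicting the definition of $h(\omega)$. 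Thus $|\omega|\le|\hat\omega^1|$ and $\omega\prec\hat\omega^1$.

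For the first equality I would argue as follows. By \eqref{mar19.8} we have $\Prob_\beta(\hwalk\setminus\hat\walk^j)\to 0$, so $E$ and $E\cap\hat\walk^j$ have the same limiting $\Prob_\beta$-measure, provided the latter limit exists. On $\hwalk\cap\hat\walk^j$ I use the unique decomposition $\hat\omega=\hat\omega^1\bcat\hat\omega^2$ with $\hat\omega^1\in\ibridge_j$ and $\hat\omega^2\in\hwalk^*$ recorded just before the proposition; since $h(\hat\omega^1)\ge j$, the observation gives $\omega\prec\hat\omega\iff\omega\prec\hat\omega^1$. Writing $\beta^{|\hat\omega|}=\beta^{|\hat\omega^1|}\beta^{|\hat\omega^2|}$ and factoring the sum (using $\sum_{\hwalk^*}\beta^{|\cdot|}=H(\beta)-1$) gives
\[
\Prob_\beta(E\cap\hat\walk^j)=\frac{H(\beta)-1}{H(\beta)}\sum_{\hat\omega^1\in\ibridge_j,\ \omega\prec\hat\omega^1}\beta^{|\hat\omega^1|}.
\]
As $\beta\to\beta_c-$ the prefactor tends to $1$ because $H(\beta_c)=\infty$ (established in the proof of Proposition~\ref{kesten_prob}), while the sum increases to $\sum_{\hat\omega^1\in\ibridge_j,\ \omega\prec\hat\omega^1}\beta_c^{|\hat\omega^1|}=Q_j(E)$ by monotone convergence, the total being bounded by $Q_j(\hwalk)=1$. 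This yields $\lim_{\beta\to\beta_c-}\Prob_\beta(E)=Q_j(E)$.

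For the second equality I would compare $Q^j$ and $Q_j$ directly. A walk $\hat\omega=\eta^1\bcat\cdots\bcat\eta^j\in\ibridge^j$ (with $\eta^i\in\ibridge$) has a well-defined first-passage prefix $B=\eta^1\bcat\cdots\bcat\eta^M\in\ibridge_j$, where $M$ is the least index with $h(\eta^1\bcat\cdots\bcat\eta^M)\ge j$; since the height grows by at least one per irreducible bridge, $M\le j$. The tail $\eta^{M+1}\bcat\cdots\bcat\eta^j$ sits above height $h(B)\ge j$, so the observation again gives $\omega\prec\hat\omega\iff\omega\prec B$. Grouping the tuples by their prefix $B$, and noting that for fixed $B$ (with $M$ irreducible factors) the free tail $\eta^{M+1},\dots,\eta^j$ ranges over all of $\ibridge$ and so contributes $I(\beta_c)^{\,j-M}=1$, where $I(\beta_c)=1$ by Proposition~\ref{kesten_prob}, gives
\[
Q^j(E)=\sum_{B\in\ibridge_j,\ \omega\prec B}\beta_c^{|B|}=Q_j(E).
\]
Together these prove the proposition. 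The only genuinely new input is the observation in the first paragraph, and that is where I expect the whole difficulty to sit: it is precisely the hypothesis $h(\omega)\le j$ that forces $\omega$ to lie entirely inside the first-passage bridge, which is what makes the height-$j$ prefix the correct object to record; everything else is bookkeeping already prepared by \eqref{mar19.8}, by $H(\beta_c)=\infty$, and by $I(\beta_c)=1$.
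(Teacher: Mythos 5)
Your proof is correct and takes essentially the same approach as the paper's: both rest on decomposing walks at the first-passage bridge in $\ibridge_j$ and on the fact that everything attached above height $j$ carries total mass one (via $I(\beta_c)=1$, respectively $H(\beta)\to\infty$ and \eqref{mar19.8}). Your key observation---that $h(\omega)\le j$ forces $\omega$ to lie inside that first-passage prefix---is exactly the point the paper uses implicitly when it asserts that its set $A$ is the disjoint union of the cylinders $\hwalk(\bar\omega)$, $\bar\omega\in\ibridge_j$, $\omega\prec\bar\omega$, and that the leftover set $B$ is negligible under all three measures, so the two arguments differ only in bookkeeping.
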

\begin{proof}
We break the set 
$\{\hat \omega  :\omega \prec \hat \omega\}$ 
into two disjoint pieces $A$ and $B$.   
$A$ contains all the $\hat \omega$ for which there is an 
$\bar \omega \in \ibridge_j$ such that 
$\omega \prec \bar \omega \prec \hat \omega$.
$B$ contains all the $\hat \omega$ for which no such $\bar \omega$ 
exists. Then $A$ is the disjoint union
\[
A=\bigcup_{\bar \omega \in \ibridge_j : \omega \prec \bar \omega}
\hwalk(\bar \omega)
\]
The elements of $B$ are all extensions of $\omega$ which 
are irreducible above height $j$ and are not bridges.
$Q^j(\hwalk(\bar\omega))=\beta_c^{|\bar\omega|}=P_{\beta_c}(\hwalk(\bar\omega))$.  
$B$ has measure zero under $Q^j$ because all the elements
have less than $j$ irreducible bridges by construction. From 
\reff{shaneedit1} we see $\lim_{\beta\ra\beta_c}P_\beta(B)=0$.
The only elements with positive measure in $Q_j$ are those which are bridges
and are irreducible above height $j$. Each 
$\hwalk(\bar\omega)$ contains only one such SAW, namely $\bar \omega$.
So $Q_j(\hwalk(\bar\omega))=Q_j(\bar\omega)=\beta_c^{|\bar\omega|}$. 
\end{proof}

\subsection{Half-plane SAW starting at interior point}

We have considered half-plane SAW's that start
at a boundary point.  We can also consider walks
that start at an interior point.

\begin{definition}
If $z = x + iy \in \Z^2$ with $y > 0$, let $\hwalk^z$
denote the set of walks $\omega =[\omega_0,\ldots,\omega_n]$
with $\omega_0 = z$ and $\Im[\omega_j] >0$ for all $j$.
Let
\[    H(\beta,k) = \sum_{\omega \in \hwalk^{ki}}
     \beta^{|\omega|} = \sum_{\omega \in \hwalk^{x+ki}}
     \beta^{|\omega|}.  \]
\end{definition}

Note that if $\beta \geq 0$, then $H(\beta,k)$
is increasing in $k$;
Let $\Prob_{\beta,z}$ be the probability measure on
$\hwalk^z$ defined by
\[    \Prob_{\beta,x+ki}(\omega) =
           H(\beta,k)^{-1} \, \beta^{|\omega|}.\]
 In
the case $z=i$, there is an obvious bijection between
 $\hwalk^i$ 
and $\tilde \hwalk$. Hence 
\[   H(\beta,1) = \tilde H(\beta) \leq \beta^{-1}
  \, H(\beta) . \]

\begin{proposition}
For $\beta < \beta_c$ and each $k$, 
\begin{equation}  \label{mar20.4}
  H(\beta,k)  \leq (\beta^{-1}+ 3)^{k}
  \, H(\beta). 
\end{equation}
\end{proposition}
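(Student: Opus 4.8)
The plan is to prove \reff{mar20.4} by induction on $k$, reducing it to the one-level estimate
\[
   H(\beta,k+1)\ \le\ (\beta^{-1}+3)\,H(\beta,k),\qquad k\ge 1,
\]
together with the base case $H(\beta,1)=\tilde H(\beta)\le\beta^{-1}H(\beta)$, which is already recorded just before the proposition. Since $\beta^{-1}+3\ge\beta^{-1}$, iterating the one-level estimate from $k$ down to $1$ and then applying the base case gives $H(\beta,k)\le(\beta^{-1}+3)^{k-1}\beta^{-1}H(\beta)\le(\beta^{-1}+3)^{k}H(\beta)$, which is exactly \reff{mar20.4}. So everything rests on the one-level estimate.

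To obtain the one-level estimate I would compare walks rooted at $(k+1)i$ with walks rooted at $ki$ by prepending a single downward step. Given $\omega\in\hwalk^{(k+1)i}$ for which the site $ki$ is \emph{not} visited by $\omega$, the walk $[ki,(k+1)i]\acat\omega$ lies in $\hwalk^{ki}$: its imaginary part is $\ge 1$ everywhere (here $k\ge 1$ is used), and self-avoidance is preserved precisely because $ki\notin\omega$. This map is injective, since one recovers $\omega$ by deleting the initial vertex, and it increases the length by one, so the walks avoiding $ki$ contribute at most $\beta^{-1}H(\beta,k)$. This is the source of the $\beta^{-1}$ term.

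The remaining and genuinely delicate contribution comes from the walks $\omega\in\hwalk^{(k+1)i}$ that \emph{do} visit $ki$; these I must bound by $3\,H(\beta,k)$. The natural first move is to cut such an $\omega$ at its first visit to $ki$, writing $\omega=\omega^1\acat\omega^2$ with $\omega^2\in\hwalk^{ki}$, and to note that the first step of $\omega^2$ cannot go to $(k+1)i$, that site being occupied by $\omega_0$. The hard part will be controlling the entrance piece $\omega^1$: simply summing $\beta^{|\omega^1|}$ over all admissible entrances and multiplying by $\sum\beta^{|\omega^2|}$ is too lossy, because that product discards the self-avoidance constraint between $\omega^1$ and $\omega^2$, and the resulting entrance generating function is a two-point-type quantity that I do not expect to stay bounded as $\beta\to\beta_c-$. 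I therefore expect the correct argument to retain self-avoidance throughout, either through an explicit injection of the visiting walks into a bounded number of copies of $\hwalk^{ki}$, or through a local surgery at $ki$ that reroutes the step entering that vertex. The combinatorial shape of the target constant, $(\beta^{-1}+3)^{k}=\sum_{j}\binom{k}{j}\beta^{-(k-j)}3^{\,j}$, strongly suggests per-level bookkeeping: at each of the $k$ descents one either steps straight down (the factor $\beta^{-1}$) or, when that site is blocked by the walk, detours around it using one of at most three local configurations (the factor $3$). Making this dichotomy precise and, above all, uniform in $\beta$ — in particular ruling out that a single self-avoiding walk can force arbitrarily costly detours — is the main obstacle; once the visiting walks are shown to contribute at most $3\,H(\beta,k)$, the one-level estimate and hence the proposition follow.
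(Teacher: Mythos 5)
Your skeleton is the paper's: induction on $k$ via a one-level estimate $H(\beta,k+1)\le(\beta^{-1}+3)H(\beta,k)$, with base case $H(\beta,1)=\tilde H(\beta)\le\beta^{-1}H(\beta)$, and with the prepend-a-step injection correctly disposing of the walks in $\hwalk^{(k+1)i}$ that avoid $ki$. But there is a genuine gap, and you name it yourself: you never prove that the walks visiting $ki$ contribute at most $3\,H(\beta,k)$; you only conjecture that some injection or local surgery ``should'' exist. That bound is the entire content of the one-level estimate, so as written the proposal proves nothing beyond the easy half of the decomposition.

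The missing surgery is a path reversal, and it is short. Suppose $\omega\in\hwalk^{(k+1)i}$ visits $ki$ for the first time at step $j$. If $j\ge 2$, write $\omega=\omega^1\acat\omega^2$ with $\omega^1=[\omega_0,\ldots,\omega_j]$, $\omega_j=ki$, and note that $\omega_{j-1}\in\{1+ki,\,-1+ki,\,(k-1)i\}$, since the fourth neighbor $(k+1)i$ equals $\omega_0$ and is excluded by self-avoidance. Replace $\omega^1$ by $\tilde\omega^1=[\omega_{j-1},\omega_{j-2},\ldots,\omega_0,\omega_j]$ (a legitimate walk because $\omega_0=(k+1)i$ and $\omega_j=ki$ are adjacent) and set $\Phi(\omega)=\tilde\omega^1\acat\omega^2$. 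This walk has the same vertex set as $\omega$, hence is automatically self-avoiding, has the same length --- which dissolves your worry about $\beta$-dependent, arbitrarily costly detours --- and $\Phi$ is injective into $\hwalk^{1+ki}\cup\hwalk^{-1+ki}\cup\hwalk^{(k-1)i}$; by translation invariance and monotonicity of $H(\beta,\cdot)$ in the height, these contribute at most $2H(\beta,k)+H(\beta,k-1)\le 3H(\beta,k)$. The remaining case $j=1$ (first step straight down, $\omega=[(k+1)i,ki]\acat\tilde\omega$) is handled by deleting that step, a map whose image consists of walks in $\hwalk^{ki}$ avoiding $(k+1)i$. One final wrinkle: to obtain the constant $\beta^{-1}+3$ exactly, the paper groups this $j=1$ case with your \emph{non}-visiting case rather than with the reversal case; the two images (walks whose first step is up to $(k+1)i$, and walks avoiding $(k+1)i$) are disjoint subsets of $\hwalk^{ki}$, so their combined weight is at most $\beta^{-1}H(\beta,k)$. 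Your two-way dichotomy, with $j=1$ lumped among the visiting walks, would only give $(\beta^{-1}+3+\beta)H(\beta,k)$ per level, which does not match the stated inequality.
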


\begin{remark}  This estimate is not very
sharp but it will be useful.  The important fact
is that for each $k$, there is a $c_k 
= (2\beta_c^{-1} + 3)^k< \infty$
such that for all $\beta_c/2<
\beta < \beta_c$,
\begin{equation}  \label{mar19.9}
      H(\beta,k) \leq c_k \, H(\beta). 
\end{equation}
\end{remark}

\begin{proof}
We will proceed by induction; we have already
established \eqref{mar20.4}
for $k=1$. Recall that
$\beta < \beta_c < 1$.
  Suppose $k \geq 2$.
We will define an injection  
\[   \Phi: \hwalk^{ki} \longrightarrow
     \bigcup  \hwalk^z , \]
where the union is over $z \in \{(k-1)i, 1 + (k-1)i,
-1 + (k-1) i, (k-2)i\}.$ 
 We
partition $\hwalk^{ki}$ into three sets: $W_1$,
the 
walks that do not visit $(k-1)i$; $W_2$, the
 walks
whose first step is to $(k-1)i$; and $W_3$, the 
walks that visit $(k-1)i$ but not on the first step.

If  $ \omega \in W_1$, let
\[   \Phi(\omega) = [(k-1)i,ki] \acat
  \omega . \]
Then $|\Phi(\omega) | = |\omega| + 1$ and the first
step of $\Phi(\omega)$ is to $ki$. 
If $\omega = [ki,(k-1)i] \acat \tilde \omega
 \in W_2$,  let
\[   \Phi(\omega) = \tilde \omega. \]
Then $|\Phi(\omega)| = |\omega| - 1$ and $\Phi(\omega)$
does not visit $ki$.  In particular, the images
$\Phi(W_1)$ and $\Phi(W_2)$ are disjoint, and we can
conclude 
\[  \sum_{\omega \in W_1 \cup W_2}
      \beta^{|\omega|} \leq \beta^{-1}
 \, H(\beta,k-1).\]

If $\omega = [\omega_0,\ldots,\omega_n] \in
W_3$, let $j$ be the vertex with $\omega_j
 = (k-1)i$ and write
\[          \omega = \omega^1 \acat \omega^2, \]
where $\omega^1=[\omega_0,\ldots,\omega_j]$.
Note that  
\[ \omega_{j-1}\in \{(k-2)i, 1 + (k-1)i,
-1+(k-1)i\}.\]  
Define
\[ \Phi(\omega) =
\tilde \omega^1 \acat \omega^2 \mbox{ where }
  \tilde \omega^1 = [\omega_{j-1},\ldots,
\omega_0,\omega_j].\]
 Note that $|\Phi(\omega)| = |\omega|$
and that $\Phi$ maps $W_3$ injectively into
\[ \hwalk^{(k-2)i} \cup \hwalk^{1 + (k-1)i}
  \cup \hwalk^{-1+(k-1)i}.\]
 Hence
\[ \sum_{\omega \in W_3}
      \beta^{|\omega|}  \leq
   2 \, H(\beta,k-1) + H(\beta,k-2) \leq
   3 \, H(\beta,k-1) . \]
\end{proof}

\begin{lemma}
If $k < l$, then
\begin{equation}  \label{mar19.6}
 \sum_{\omega \in \hwalk^{ki}
  \setminus \hat \walk^l} \beta^{|\omega|}
  \leq (\beta^{-1} +3)^k \, 
\sum_{\omega \in \hwalk
  \setminus \hat \walk^l} \beta^{|\omega|}.
\end{equation}
In particular,
\[  \lim_{\beta \rightarrow \beta_c-}
    H(\beta,k)^{-1} \, \sum_{\omega \in \hwalk^{ki}
  \setminus \hat \walk^l} \beta^{|\omega|}
=0.\]
\end{lemma}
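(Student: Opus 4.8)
The plan is to reuse the injection $\Phi$ built in the proof of the preceding proposition and to show that, because $l>k$, it also respects the event of having \emph{no} cut above level $l$. Write $G_k=\sum_{\omega\in\hwalk^{ki}\setminus\hat\walk^l}\beta^{|\omega|}$. Since both $\hwalk^{x+ki}$ and the property of lying in $\hat\walk^l$ are invariant under horizontal translation, this sum is the same for every starting point at height $k$, and $G_0:=\sum_{\omega\in\hwalk\setminus\hat\walk^l}\beta^{|\omega|}$ is exactly the right-hand side of \eqref{mar19.6}. So the target is $G_k\le(\beta^{-1}+3)^kG_0$, after which the displayed limit is routine.

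\textbf{The key point (main obstacle).} The step I expect to require the most care is the claim: \emph{if $l>k$ then, for $\omega\in\hwalk^{ki}$, one has $\omega\in\hat\walk^l$ if and only if $\Phi(\omega)\in\hat\walk^l$}; indeed $\omega$ and $\Phi(\omega)$ have a cut at exactly the same levels $m\ge l$. In the two easy cases $W_1,W_2$, $\Phi$ only inserts or deletes one step at heights in $\{k-1,k\}$, all strictly below any candidate level $m\ge l$, so the decomposition witnessing a cut at level $m$ transfers verbatim. The delicate case is $W_3$, where $\Phi$ reverses the initial segment $\omega^1$ (from $ki$ down to its first visit of $(k-1)i$) to $\tilde\omega^1$ while leaving the remainder $\omega^2$ fixed. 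Here I would argue that any cut of $\omega$ at a level $m\ge l>k$ must have its cut edge inside $\omega^2$: since $\omega^1$ — and likewise $\tilde\omega^1$ — terminates at $(k-1)i$, a cut edge lying in $\omega^1$ would force every later vertex, in particular the endpoint $(k-1)i$, to sit at height $\ge m+1>k-1$, which is impossible. Hence the lower part of the cut decomposition contains all of $\omega^1$, and because reversal does not change the vertex set of $\omega^1$ (so $\tilde\omega^1\subset\{y\le m\}$ exactly when $\omega^1$ is), the same cut edge in $\omega^2$ witnesses a cut of $\Phi(\omega)$ at level $m$, and conversely. This is what makes the reversal harmless above height $k$.

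\textbf{Conclusion.} Granting this, $\Phi$ restricts to a length-tracking injection of $\hwalk^{ki}\setminus\hat\walk^l$ into $\bigcup_z(\hwalk^z\setminus\hat\walk^l)$ over the same four base points, and the bookkeeping of the previous proof yields $G_k\le\beta^{-1}G_{k-1}+2G_{k-1}+G_{k-2}$ for $k\ge2$, together with $G_1\le\beta^{-1}G_0$ from the $k=1$ case of $\Phi$ (which prepends $[0,i]$ and lands in $\hwalk\setminus\hat\walk^l$). Unlike $H(\beta,k)$, the quantity $G_k$ is not obviously monotone in $k$ — a vertical shift moves cut levels — so rather than fold $G_{k-2}$ into $G_{k-1}$ I would close the estimate by a two-step induction: with $c=\beta^{-1}+3$ one checks $(\beta^{-1}+2)c+1\le c^2$ (the difference is $\beta^{-1}+2>0$), so $G_{k-1}\le c^{k-1}G_0$ and $G_{k-2}\le c^{k-2}G_0$ give $G_k\le c^kG_0$, which is \eqref{mar19.6}. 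For the limit I divide by $H(\beta,k)$, use $H(\beta,k)\ge H(\beta,1)=(H(\beta)-1)/\beta$ and $G_0=H(\beta)\,\Prob_\beta(\hwalk\setminus\hat\walk^l)$ to obtain
\[
 H(\beta,k)^{-1}\sum_{\omega\in\hwalk^{ki}\setminus\hat\walk^l}\beta^{|\omega|}
 \le (\beta^{-1}+3)^k\,\frac{\beta\,H(\beta)}{H(\beta)-1}\,\Prob_\beta(\hwalk\setminus\hat\walk^l).
\]
As $\beta\to\beta_c-$ the first two factors stay bounded (recall $H(\beta)\to\infty$), while $\Prob_\beta(\hwalk\setminus\hat\walk^l)\to0$ by \eqref{mar19.8}, so the product tends to $0$.
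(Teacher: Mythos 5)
Correct, and essentially the paper's own proof: the paper likewise reuses the injection $\Phi$, stating that $\omega \in \hat\walk^l$ if and only if $\Phi(\omega) \in \hat\walk^l$, and then obtains the limit by bounding $H(\beta,k)^{-1}$ from above (via the monotonicity of $H(\beta,k)$ in $k$) and invoking \eqref{mar19.8}. The two places where you add detail --- the explicit verification of cut-preservation in the $W_3$ case, and the two-step induction replacing the monotonicity $G_{k-2} \leq G_{k-1}$, which indeed is not obvious for the cut-restricted sums even though the paper's ``same as the previous proof'' implicitly leans on it --- are details the paper leaves to the reader, not a different route.
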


\begin{proof}  The proof of \eqref{mar19.6}
is  the same as the previous
proof noting that $\omega \in \hat \walk^l$
if and only if $\Phi(\omega) \in \hat \walk^l$.
Since $H(\beta,k)$ increases in $k$,
\[ H(\beta,k)^{-1} \, \sum_{\omega \in \hwalk^{ki}
  \setminus \hat \walk^l} \beta^{|\omega|}
  \leq \frac{(\beta^{-1} + 3)^k}
   {H(\beta)} \sum_{\omega \in \hwalk
  \setminus \hat \walk^l} \beta^{|\omega|}
   = (\beta^{-1} + 3)^k \, \Prob_\beta
         (\hwalk \setminus \hat \walk^l).\]
 The final assertion then follows from
\eqref{mar19.8}.
\end{proof}

\begin{definition}
If $\omega=[x+ ki,\omega_1,\ldots,
\omega_n] \in \hwalk^{x+ki}$ and $m  > k$,
we say that $\omega$ is an $m$-irreducible bridge
if there is an $l \geq m$ such that
 $l= \Im[\omega_n] = \max_{1 \leq j \leq n}
\Im[\omega_j]$ and $\omega$ has no 
cuts of level $\geq m$. (Note that an m-irreducible 
bridge is not necessarily a bridge in the sense 
described in the previous section.) Let
\[    \tilde I(\beta,k;m) = \sum \beta^{|\omega|}, \]
where the sum is over all $m$-irreducible bridges
in $\hwalk^{x+ki}$.
\end{definition}

\begin{proposition}  \label{mar20.prop2}
If $m >k$, then
\[ \sum_{\omega \in \hwalk^{ki} \cap \hat
   \walk^m} \beta^{|\omega|} =  \tilde I(\beta,k;m) 
 \, H(\beta) \]
In particular,
\[   \tilde I(\beta_c,k;m) =  \lim_{\beta \rightarrow
\beta_c-} \frac{H(\beta,k)}{H(\beta)} < \infty, \]
so $\tilde I(\beta_c,k,m)$ is independent of $m$, and we denote it just by
 $\tilde I(\beta_c,k)$.
\end{proposition}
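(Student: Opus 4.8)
The plan is to prove the displayed identity by a direct combinatorial factorization and then to extract the limiting statement from it using the estimates already established. The factorization is the interior analogue of the decomposition $\omega = \omega^1 \bcat \omega^2$ with $\omega^1 \in \ibridge_k$, $\omega^2 \in \hwalk^*$ used above for walks started on the boundary, so I would model the argument on that case.

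First I would set up the bijection. Given $\omega \in \hwalk^{ki} \cap \hat \walk^m$, let $l^*$ be the \emph{smallest} level $l \geq m$ at which $\omega$ has a cut; this exists because $\omega \in \hat \walk^m$ and because the walk is finite, and the cut at a prescribed level is unique. Writing the corresponding cut as $\omega = \alpha \acat [\,j + l^* i,\, j + (l^*+1)i\,] \acat \gamma$, I would take $\omega^1 = \alpha$ as the bottom piece and let $\omega^2$ be the cut edge together with $\gamma$. Since $\alpha \subset \{y \leq l^*\}$ ends at height $l^*$, the piece $\omega^1$ ends at its own maximal height $l^* \geq m$, so it is a bridge in the required sense; and $\omega^1$ has no cut of level $\geq m$, for a cut of $\alpha$ at a level $l' \geq m$ would have $l' < l^*$ and, since $\gamma \subset \{y \geq l^*+1\} \subset \{y \geq l'+1\}$, would extend to a cut of $\omega$ at level $l' < l^*$, contradicting the minimality of $l^*$. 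Hence $\omega^1$ is an $m$-irreducible bridge started at $ki$. The top piece, after translation by $-(j + l^* i)$, is a half-plane walk started at the origin; conversely any $m$-irreducible bridge concatenated with such a half-plane walk lands in $\hwalk^{ki} \cap \hat \walk^m$ with the split recovered by the same rule. Because $|\omega| = |\omega^1| + |\omega^2|$, summing $\beta^{|\omega|}$ over the two factors yields
\[
   \sum_{\omega \in \hwalk^{ki} \cap \hat \walk^m} \beta^{|\omega|} = \tilde I(\beta,k;m) \, H(\beta),
\]
where the bookkeeping of the single cut edge affects the right factor only by the trivial-walk term, which is immaterial below since $H(\beta_c) = \infty$.

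With the identity in hand, the limit follows from the previous lemma. Dividing by $H(\beta)$ gives $\tilde I(\beta,k;m) = H(\beta)^{-1} \sum_{\hwalk^{ki} \cap \hat \walk^m} \beta^{|\omega|}$, and since $\hwalk^{ki}$ is the disjoint union of $\hwalk^{ki}\cap\hat \walk^m$ and $\hwalk^{ki}\setminus\hat \walk^m$, this equals $H(\beta,k)/H(\beta)$ minus the error term $H(\beta)^{-1}\sum_{\hwalk^{ki}\setminus\hat \walk^m}\beta^{|\omega|}$. The error term tends to $0$ as $\beta \to \beta_c-$: by \eqref{mar19.6} it is at most $(\beta^{-1}+3)^k\,\Prob_\beta(\hwalk \setminus \hat \walk^m)$, which vanishes by \eqref{mar19.8}. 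Thus $\tilde I(\beta,k;m) = H(\beta,k)/H(\beta) + o(1)$. Finally, $\tilde I(\beta,k;m)$ is a power series in $\beta$ with nonnegative coefficients, hence nondecreasing in $\beta$, and it is bounded by $c_k$ on $(\beta_c/2,\beta_c)$ via $\tilde I(\beta,k;m) \leq H(\beta,k)/H(\beta) \leq c_k$ from \eqref{mar19.9}; a bounded monotone function has a limit, so $\tilde I(\beta_c,k;m)$ exists and is finite. Since the limiting value equals $\lim_{\beta\to\beta_c-} H(\beta,k)/H(\beta)$, which carries no dependence on $m$, it is independent of $m$.

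The length-additivity and the limit extraction are routine. The step I expect to require the most care is the bijection itself: verifying that splitting at the minimal cut of level $\geq m$ produces a bottom piece with no cut of level $\geq m$ (the minimality argument above) and that the map is genuinely invertible, together with confirming that the hypothesis $m > k$ guarantees the walk starts strictly below every level at which cuts are taken, so that the starting point plays no role in the decomposition.
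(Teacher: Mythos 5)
Your proof is correct and follows essentially the same route as the paper's: the identical decomposition of $\omega \in \hwalk^{ki}\cap\hat\walk^m$ into an $m$-irreducible bridge and a half-plane walk gives the product identity, and the limit assertion is extracted from the preceding lemma together with \eqref{mar19.9}, exactly as in the paper. Your extra care with the cut-edge bookkeeping (the second factor is really $H(\beta)-1$ rather than $H(\beta)$, which is immaterial since $H(\beta_c)=\infty$) addresses a point the paper's one-line proof glosses over.
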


\begin{proof}
The first assertion is immediate using the unique
decomposition of $\omega \in \hwalk^{ki} \cap \hat
   \walk^m$ as $\omega = \omega^1 \otimes \omega^2$
where $\omega^1$ is an $m$-irreducible bridge
and $\omega^2 \in \hwalk$.   
From the previous lemma, we see that 
\[  H(\beta,k) \sim \sum_{\omega \in \hwalk^{ki} \cap \hat
   \walk^m} \beta^{|\omega|} , \;\;\;\;
  \beta \rightarrow \beta_c-. \]
Using \reff{mar19.9}, the  second assertion follows.
\end{proof}

This proposition justifies the following definitions.

\begin{definitions}$\;$

\begin{itemize}

\item 
For any $z = x+ik \in \Z^2$ with $k >0$ and $m > k$, let
 $\tilde Q_{z,m}$ denote the probability measure
on $m$-irreducible bridges
in $\hat \hwalk^z$ given by
\[     \tilde Q_{z,m}(\omega) = \frac{\beta
   _c^{|\omega|}}  {\tilde I(\beta_c,k)}.\]

\item
The infinite half-plane SAW starting at
$z$ is obtained by choosing
\[   \omega^1 \bcat \omega^2 \bcat \cdots , \]
where $\omega^1,\omega^2,\ldots$ are independent;
$\omega^1$ is chosen from $\tilde Q_{z,m}$,
and $\omega^2,\omega^3,$ $\ldots$ are chosen from
$Q$. (The fact that this definition does not depend on $m$ 
follows easily from the fact that $\tilde I(\beta_c,k,m)$ is 
independent of $m$.)

\end{itemize}

\end{definitions}

\begin{proposition}  Suppose $z = x+ik \in \Z^2$
with $k >0$ and $m \geq k$.  For each $m$-irreducible
bridge $\omega \in
\hwalk^z$, let $\hwalk(\omega)$ denote the set of
walks $\tilde \omega \in \hwalk^z$ of the form
\[       \tilde \omega = \omega \otimes \omega^1,
\;\;\;\; \omega^1 \in\hwalk. \]
Then
\[  \lim_{\beta \rightarrow \beta_c-}
  \Prob_{\beta,z}[\hwalk(\omega)] = \tilde Q_{z,m}
 (\omega) = \frac{\beta_c^{|\omega|}}
   {\tilde I(\beta_c,k)} .\]
\end{proposition}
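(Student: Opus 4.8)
The plan is to reduce the statement to the asymptotic ratio already established in Proposition~\ref{mar20.prop2}, the only genuine input being the ``renewal'' factorization of $\hwalk(\omega)$. The key structural observation is that the map $\omega^1 \mapsto \omega \otimes \omega^1$ is a bijection from $\hwalk$ onto $\hwalk(\omega)$ that adds lengths, i.e. $|\omega \otimes \omega^1| = |\omega| + |\omega^1|$. Surjectivity onto $\hwalk(\omega)$ is the definition, and injectivity is immediate since $\omega^1$ is recovered from $\omega \otimes \omega^1$ as the translate of the portion after step $|\omega|$. First, though, I would verify that every $\omega \otimes \omega^1$ is an admissible walk. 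Because $\omega$ is an $m$-irreducible bridge, its terminal vertex $\omega_n$ attains the maximal imaginary part $l = \Im[\omega_n] = \max_j \Im[\omega_j]$, so $\omega$ lies in $\{\Im \leq l\}$. A walk $\omega^1 \in \hwalk$ has $\Im[\omega^1_j] > 0$ for $j > 0$, so its translate starting at $\omega_n$ has every vertex other than its initial one strictly above the line $\Im = l$; hence the translate meets $\omega$ only at the common vertex $\omega_n$, and $\omega \otimes \omega^1$ is a self-avoiding walk in $\hwalk^z$.

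Granting the bijection, the partition-function computation is immediate:
\[
  \sum_{\tilde\omega \in \hwalk(\omega)} \beta^{|\tilde\omega|}
   = \beta^{|\omega|} \sum_{\omega^1 \in \hwalk} \beta^{|\omega^1|}
   = \beta^{|\omega|} \, H(\beta).
\]
Dividing by the normalization $H(\beta,k)$ of $\Prob_{\beta,z}$ gives, for $\beta < \beta_c$,
\[
  \Prob_{\beta,z}[\hwalk(\omega)] = \beta^{|\omega|} \, \frac{H(\beta)}{H(\beta,k)}.
\]
Then I would pass to the limit: by Proposition~\ref{mar20.prop2}, $H(\beta,k)/H(\beta) \ra \tilde I(\beta_c,k)$ as $\beta \ra \beta_c-$, and this limit is finite and strictly positive (it is at least $\beta_c^{|\omega|}$ for any single $m$-irreducible bridge, e.g.\ the straight vertical segment from $ki$ to $mi$), so $H(\beta)/H(\beta,k) \ra 1/\tilde I(\beta_c,k)$. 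Combined with $\beta^{|\omega|} \ra \beta_c^{|\omega|}$, this yields the claimed value $\beta_c^{|\omega|}/\tilde I(\beta_c,k) = \tilde Q_{z,m}(\omega)$.

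Honestly there is no deep obstacle here: all the real work is done by the definition of $\tilde I(\beta_c,k)$ and by Proposition~\ref{mar20.prop2}, and the convergence is just the product of two convergent factors. The one point deserving care is the admissibility and bijectivity of the concatenation map, i.e.\ confirming that attaching an arbitrary half-plane walk above the top vertex of an $m$-irreducible bridge can never create a self-intersection and that the decomposition $\tilde\omega = \omega \otimes \omega^1$ is unique; this is exactly where the defining property of an $m$-irreducible bridge (terminal vertex at maximal height) is used. I would also be slightly careful about the hypothesis $m \geq k$ versus the $m > k$ under which $\tilde I(\beta_c,k)$ was defined in Proposition~\ref{mar20.prop2}, applying that result for $m > k$ and noting that the value is independent of $m$.
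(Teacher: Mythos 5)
Your proof is correct and matches the paper's approach: the paper's own proof is just a pointer to Proposition \ref{equiv_ensemble}, and what that pointer unpacks to in this setting is precisely your argument---the renewal factorization $\sum_{\tilde\omega\in\hwalk(\omega)}\beta^{|\tilde\omega|}=\beta^{|\omega|}\,H(\beta)$, division by the normalization $H(\beta,k)$, and the limit $H(\beta,k)/H(\beta)\to\tilde I(\beta_c,k)\in(0,\infty)$ supplied by Proposition \ref{mar20.prop2}. Your additional checks (admissibility and injectivity of the concatenation map, strict positivity of $\tilde I(\beta_c,k)$, and the $m\ge k$ versus $m>k$ discrepancy, which is indeed a slip in the paper's statement) are sound refinements of the same method rather than a departure from it.
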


\begin{proof}  This is proved the same way as 
proposition \ref{equiv_ensemble}.
\end{proof}

\begin{definition}
Suppose $V \subset \Z^2$ 
and $z = x + yi \in \Z^2$ with $y >0$.
Then   
$\hwalk^{z,V}$ denotes the set of walks
$\omega=[z,\omega_1,\ldots,\omega_n] \in
\hwalk^z$  such that $\omega \cap V =
 \emptyset $. In particular, $\hwalk^z
= \hwalk^{z,\emptyset}.$
\end{definition}

\begin{proposition}  \label{mar20.prop3}
Suppose $z = x + li \in \Z^2$ with 
$0 < l \leq k$, $V \subset \{x+iy \in \Z^2:
 y \leq k\}$, and $m > k$.  Then
\[      \Prob_{\beta_c,z}[\hwalk^{z,V}]
    = \sum \tilde Q_{z,m}(\omega) ,\]
\[   \lim_{\beta \rightarrow \beta_c-}
     \sum \beta^{|\omega|}
    =   \Prob_{\beta_c,z}[\hwalk^{z,V}] \, \tilde
   I(\beta_c;l), \]
where the sums are over all $m$-irreducible bridges
$\omega \in \hwalk^z$ with $\omega \cap V = \emptyset$.
\end{proposition}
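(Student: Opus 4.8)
The plan is to split the claim into its two displayed identities, both of which rest on a single structural observation: because $V \subset \{x+iy : y \le k\}$ while $m > k$, whether a walk produced by the construction of the infinite half-plane SAW from $z$ meets $V$ is decided entirely by its initial $m$-irreducible bridge. Here $\Prob_{\beta_c,z}$ denotes the infinite half-plane SAW started at $z$, which by the preceding proposition is the measure realized by the explicit construction, so I may evaluate events directly on that construction.

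First I would establish the structural fact. The infinite half-plane SAW from $z$ is $\omega^1 \bcat \omega^2 \bcat \cdots$, where $\omega^1$ is an $m$-irreducible bridge distributed according to $\tilde Q_{z,m}$ and $\omega^2,\omega^3,\ldots$ are i.i.d.\ irreducible bridges distributed according to $Q$. Since $\omega^1$ is $m$-irreducible it terminates at height at least $m$; and each later bridge $\omega^i$ ($i \ge 2$), being a half-plane walk, has after translation to the top of the preceding piece all of its vertices at height $\ge m > k$. Hence the tail $\omega^2 \bcat \omega^3 \bcat \cdots$ never enters $\{y \le k\} \supseteq V$, so the infinite walk avoids $V$ if and only if $\omega^1$ does. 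Consequently
\[
   \Prob_{\beta_c,z}[\hwalk^{z,V}]
   = \tilde Q_{z,m}(\{\omega : \omega \cap V = \eset\})
   = \sum \tilde Q_{z,m}(\omega),
\]
the sum being over $m$-irreducible bridges $\omega \in \hwalk^z$ with $\omega \cap V = \eset$, which is the first identity.

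For the second identity I would unwind the definition $\tilde Q_{z,m}(\omega) = \beta_c^{|\omega|}/\tilde I(\beta_c,l)$, valid since $z$ has height $l$, to get $\Prob_{\beta_c,z}[\hwalk^{z,V}]\,\tilde I(\beta_c,l) = \sum \beta_c^{|\omega|}$, summed over the same avoiding bridges. It then remains only to pass the limit inside the sum, i.e.\ to show
\[
   \lim_{\beta \rightarrow \beta_c-} \sum_\omega \beta^{|\omega|}
   = \sum_\omega \beta_c^{|\omega|}.
\]
Each term $\beta^{|\omega|}$ is nonnegative and increases to $\beta_c^{|\omega|}$ as $\beta \uparrow \beta_c$, so monotone convergence applies; the only point to verify is finiteness of the limiting sum, and this is immediate because the avoiding bridges form a subcollection of all $m$-irreducible bridges in $\hwalk^z$, whose total weight $\sum \beta_c^{|\omega|}$ equals $\tilde I(\beta_c,l) < \infty$ by Proposition~\ref{mar20.prop2}. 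Combining the two displays yields the second identity.

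The argument is essentially bookkeeping, and I expect the main obstacle to be the height accounting in the structural step rather than anything analytic: one must check carefully that after the successive translations the tail $\omega^2 \bcat \omega^3 \bcat \cdots$ genuinely remains above level $k$, so that the avoidance event localizes to $\omega^1$ alone. Everything after that is the definition of $\tilde Q_{z,m}$ together with the finiteness supplied by Proposition~\ref{mar20.prop2} and monotone convergence.
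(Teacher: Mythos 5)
Your combinatorial core is right: because $V\subset\{x+iy:\,y\le k\}$ and $m>k$, every continuation of an $m$-irreducible bridge stays at heights $\ge m>k$, so avoidance of $V$ is decided by the initial bridge alone; and your monotone-convergence step, with finiteness supplied by Proposition~\ref{mar20.prop2}, is fine. The genuine gap is your opening move, where you decree that $\Prob_{\beta_c,z}$ \emph{is} the constructed infinite half-plane SAW and then evaluate the event on that construction. In this paper, $\Prob_{\beta_c,\,\cdot\,}$ of an event is \emph{defined} as the $\beta\uparrow\beta_c$ limit of $\Prob_{\beta,\,\cdot\,}$ (see the ``$:=$'' in Proposition~\ref{equiv_ensemble}), and $\hwalk^{z,V}$ is literally a set of \emph{finite} walks on which each $\Prob_{\beta,z}$ acts directly; so the first display asserts that $\lim_{\beta\to\beta_c-}\Prob_{\beta,z}[\hwalk^{z,V}]$ exists and equals $\sum\tilde Q_{z,m}(\omega)$. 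The unnumbered proposition preceding this one identifies that limit with $\tilde Q_{z,m}$ only on cylinder sets $\hwalk(\omega)$. The event $\hwalk^{z,V}$ is not one: under $\Prob_{\beta,z}$ it contains, besides the disjoint union of the $\hwalk(\omega^1)$ over avoiding $m$-irreducible bridges $\omega^1$, all walks avoiding $V$ with no cut at level $\ge m$; and even on that union, termwise convergence does not by itself give convergence of the infinite sum. This limit interchange is the analytic content of the proposition — under your reading both displays collapse to near-tautologies, which is itself a sign the reading is not the intended one — and your proof never touches it.

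The paper states this proposition without proof, but the intended argument is the same computation as in the lemma containing \eqref{mar19.6} and Proposition~\ref{mar20.prop2}, and your structural observation slots directly into it. Write
\[
\Prob_{\beta,z}[\hwalk^{z,V}]
=\frac{1}{H(\beta,l)}\sum_{\omega\in\hwalk^{z,V}\cap\hat\walk^m}\beta^{|\omega|}
+\frac{1}{H(\beta,l)}\sum_{\omega\in\hwalk^{z,V}\setminus\hat\walk^m}\beta^{|\omega|}.
\]
The walks in $\hwalk^{z,V}\cap\hat\walk^m$ are exactly the $\omega^1\bcat\omega^2$ with $\omega^1$ an $m$-irreducible bridge in $\hwalk^z$ avoiding $V$ and $\omega^2\in\hwalk$ (this is where your localization is used), so the first sum equals $\bigl(\sum_{\omega^1}\beta^{|\omega^1|}\bigr)H(\beta)$. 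The second sum is over a subset of $\hwalk^{z}\setminus\hat\walk^m$, hence is $o\bigl(H(\beta,l)\bigr)$ by \eqref{mar19.6} and \eqref{mar19.8} applied with starting height $l$ and cut level $m$. Finally $H(\beta)/H(\beta,l)\to 1/\tilde I(\beta_c,l)$ by Proposition~\ref{mar20.prop2}. Letting $\beta\uparrow\beta_c$ and invoking your monotone-convergence step on $\sum_{\omega^1}\beta^{|\omega^1|}$ now yields both displays of the proposition simultaneously, and in particular proves that the limit defining $\Prob_{\beta_c,z}[\hwalk^{z,V}]$ exists — the fact your version assumed away.
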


\subsection{Limit of counting measure}  \label{unifsec}

Another natural measure on $\hwalk$ is $\Prob^n$,
the uniform measure on all walks in $\hwalk$
of length $n$.  
(We think of $\Prob^n$ as a probability measure on $\hwalk$ by
defining $\Prob(\omega)=0$ if $|\omega| \neq n$.)
This is the measure that is easiest
to simulate.  Here we show that the limit as $n \rightarrow
\infty$ of $\Prob^n$ is the infinite half-plane SAW.
 Let $Y_n$ denote
the cardinality of $\{\omega \in \hwalk: |\omega|
 = n\}.$  We will use without proof Kesten's result \cite{kestena}
\begin{equation}  \label{kesten}
          \lim_{n \rightarrow \infty}
    \frac{Y_{n}}{Y_{n+2}} = \beta_c^2.
\end{equation}
(Kesten proves this for walks in the full plane. His argument works for 
half-plane walks as well.) 
Suppose $\omega^1,\ldots,\omega^j \in \ibridge$
with $m = |\omega^1| + \cdots + |\omega^j|$,
and let
$\hwalk(\omega^1,\ldots,\omega^j)$ be as
in Proposition \ref{mar19.prop1}. Then
\begin{equation}  \label{mar20.1}
   \Prob^n[\hwalk(\omega^1,\ldots,\omega^j)]
=      \frac{ \, Y_{n-m}}{Y_n} , 
\end{equation}
where $Y_{k} = 0 $ if $k < 0$.

\begin{proposition}  \label{uniform_conv}
If $\omega^1,\ldots,\omega^j
\in \ibridge$ with $|\omega^1| + \cdots +
|\omega^j| = m$, then
\[    \lim_{n \rightarrow \infty}
     \Prob^n [\hwalk(\omega^1,\ldots,\omega^j)]
   = \beta_c^m . \]
\end{proposition}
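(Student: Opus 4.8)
The plan is to combine the exact identity \eqref{mar20.1} with Kesten's ratio limit \eqref{kesten}. By \eqref{mar20.1} we have $\Prob^n[\hwalk(\omega^1,\ldots,\omega^j)] = Y_{n-m}/Y_n$, so the statement is equivalent to
\[ \lim_{n\rightarrow\infty}\frac{Y_{n-m}}{Y_n} = \beta_c^{\,m} \]
for each fixed $m$. When $m$ is even this is immediate: write
\[ \frac{Y_{n-m}}{Y_n} = \prod_{i=1}^{m/2}\frac{Y_{n-2i}}{Y_{n-2i+2}}, \]
a product of $m/2$ factors, each of which tends to $\beta_c^2$ by \eqref{kesten}, so the product tends to $(\beta_c^2)^{m/2}=\beta_c^{\,m}$.

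The real issue is parity. Since $[0,i]\in\ibridge$ and $[0,i,1+i]\in\ibridge$, the length $m=|\omega^1|+\cdots+|\omega^j|$ can be any positive integer, whereas \eqref{kesten} only relates counts across an even gap. Factoring $Y_{n-m}/Y_n = (Y_{n-m}/Y_{n-m+1})(Y_{n-m+1}/Y_n)$ and applying the even case to the second factor reduces everything to the single-step statement $Y_{k}/Y_{k+1}\rightarrow\beta_c$. Here two soft facts hold: the injection $\eta\mapsto[0,i]\bcat\eta$ shows $Y_k\le Y_{k+1}$, and deleting the last step is at most $3$-to-$1$, so $Y_{k+1}\le 3Y_k$; hence $r_k:=Y_{k+1}/Y_k\in[1,3]$ with $r_kr_{k+1}\rightarrow\beta_c^{-2}$. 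I want to emphasise that this is not yet enough: these constraints are equally consistent with $r_k$ oscillating between two limits whose product is $\beta_c^{-2}$, and excluding this even/odd oscillation is exactly the notoriously open single-step ratio problem for unrestricted SAW.

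What rescues the half-plane case is the other Kesten input, Proposition \ref{kesten_prob}. Because $I(\beta_c)=1$, the numbers $p_\ell:=\sum_{\omega\in\ibridge,\,|\omega|=\ell}\beta_c^{\,\ell}$ form a genuine probability distribution on $\{1,2,\ldots\}$, aperiodic because irreducible bridges of lengths $1$ and $2$ both occur. Reading $B=1/(1-I)$ off coefficientwise gives $b_n\beta_c^{\,n}=\sum_{r\ge0}p^{*r}_n$, i.e.\ $b_n\beta_c^{\,n}$ is the renewal mass at $n$ of this aperiodic renewal, where $b_n$ is the number of bridges of length $n$. The plan is to deduce $b_{n+1}/b_n\rightarrow\beta_c^{-1}$ from the renewal theorem (with a Garsia--Lamperti regularity estimate should the mean $\sum_\ell\ell\,p_\ell$ be infinite), and then to transfer this to $Y_n$ through the factorisation $H=B\cdot F$, where $F$ is the generating function of cut-free half-plane walks: peeling a half-plane walk at its highest horizontal cut writes it uniquely as a bridge concatenated with a walk having no cut, so $Y_n\beta_c^{\,n}=(u*g)_n$ with $u$ the bridge renewal mass and $g_n=f_n\beta_c^{\,n}$, $f_n=[\beta^n]F$. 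Provided $F(\beta_c)<\infty$, convolution of the regular sequence $u$ with the summable sequence $g$ preserves the one-step ratio, yielding $Y_{k+1}/Y_k\rightarrow\beta_c^{-1}$.

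The step I expect to be the main obstacle is precisely this single-step (odd-$m$) ratio. The even case is a one-line telescoping of \eqref{kesten}, but the single-step limit cannot be obtained by soft sandwiching and genuinely needs the critical renewal identity $I(\beta_c)=1$ to break the potential even/odd oscillation. Within that argument the two delicate points are the possible infinite mean of the irreducible-bridge length distribution (forcing a ratio, rather than merely a limit, form of the renewal theorem) and the finiteness $F(\beta_c)<\infty$ needed to pass cleanly from bridge counts to half-plane counts.
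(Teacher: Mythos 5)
Your reduction via \eqref{mar20.1} and the telescoping treatment of even $m$ match the paper's proof exactly, and your diagnosis of the crux is also right: everything comes down to ruling out an even/odd oscillation, and the input that can break it is Kesten's identity $I(\beta_c)=1$ (Proposition \ref{kesten_prob}). But the way you deploy that identity --- renewal theory for the sequence $u_n=b_n\beta_c^{\,n}$ --- has two gaps that cannot be closed with known results, so the odd case of your proof does not go through. First, the classical renewal theorem gives $u_{n+1}/u_n\to 1$ only when the step distribution $p_\ell$ has finite mean; whether $\sum_\ell \ell\,p_\ell<\infty$ is open, and the conjectured picture is that it is \emph{false} (the renewal mass $b_n\beta_c^{\,n}$ is expected to decay like a power of $n$, which forces infinite mean). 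In the infinite-mean regime, ratio convergence of a renewal sequence is not a general theorem for aperiodic step distributions: Garsia--Lamperti-type strong renewal theorems require regular variation of the tail of $p_\ell$ (and for tail index at most $1/2$ even that is insufficient without further conditions), and regular variation for the SAW irreducible-bridge length distribution is itself an open conjecture, on the same footing as the exponent predictions one would be trying to verify. Second, your transfer from bridges to half-plane walks assumes $F(\beta_c)<\infty$ for the cut-free generating function. Since $F(\beta)=H(\beta)/B(\beta)$, this is equivalent to $H/B$ staying bounded as $\beta\uparrow\beta_c$; that is unknown, and the conjectured exponents (bridges are polynomially rarer than half-plane walks, so $B$ diverges strictly more slowly than $H$) predict $F(\beta_c)=\infty$. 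So you would be conditioning on a hypothesis that is very likely false; and even granting it, passing a one-step ratio through the convolution $Y_n\beta_c^{\,n}=(u*g)_n$ needs a domination estimate, not just summability of $g$.

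The idea you are missing is that no ratio theorem for $Y_{n+1}/Y_n$ is needed as an \emph{input}; the paper gets the odd case by soft arguments, and the single-step ratio falls out as a corollary. The two observations are: (i) since the events $\hwalk(\eta^1,\dots,\eta^j)$ over distinct $j$-tuples are disjoint and their target masses $\beta_c^{|\eta^1|+\cdots+|\eta^j|}$ sum to $1$, it suffices to prove $\liminf_n \Prob^n[\hwalk(\omega^1,\dots,\omega^j)]\ge\beta_c^{\,m}$ for every tuple (an excess limsup for one tuple would contradict total mass $1$, by Fatou applied to the others); (ii) to get this liminf when the gap $m$ is odd, append tuples $(\eta^1,\dots,\eta^k)$ with $|\eta^1|,\dots,|\eta^{k-1}|$ even and $|\eta^k|$ odd. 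By $I(\beta_c)=1$ these carry total mass $1$ --- this is \eqref{shaneedit2}, which is exactly your aperiodicity remark used combinatorially rather than analytically --- and the combined gap $m+l$ is then even, so \eqref{kesten} telescopes for every walk length, and Fatou gives $\liminf_n\Prob^n[\hwalk(\omega^1,\dots,\omega^j)]\ge {\sum}^\dagger\beta_c^{\,m+l}=\beta_c^{\,m}$. (A caveat if you compare with the printed proof: the paper runs the appended-tuple step for $m$ even along odd walk lengths, where the combined gap $m+l$ is odd and \eqref{kesten} does not telescope; the trick is needed, and works, precisely when $m$ is odd, while for $m$ even both parities of walk length follow directly from \eqref{kesten}. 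That is an expositional slip in the paper, not a flaw in the method.)
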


\begin{remark}
 As before, let $\hwalk^\infty(\omega^1,\ldots,\omega^j)$ denote 
the set of infinite length walks that start with 
$\omega^1 \bcat \cdots \bcat \omega^j$. Then 
$Q^\infty[\hwalk^\infty(\omega^1,\ldots,\omega^j)]=\beta_c^m$. So the 
proposition says that 
\[    \lim_{n \rightarrow \infty}
     \Prob^n [\hwalk(\omega^1,\ldots,\omega^j)]
   = Q^\infty[\hwalk^\infty(\omega^1,\ldots,\omega^j)]
\]
This is the precise sense in which the limit as $n \rightarrow \infty$
of $\Prob^n$ is the infinite half-plane SAW, i.e., $Q^\infty$.
\end{remark}

\begin{proof}   
In \cite{LSW_SAW} it was shown that 
\begin{equation}  
          \lim_{n \rightarrow \infty}
    \frac{Y_{n}}{Y_{n+1}} = \beta_c.
\end{equation}
If we assume this result, then the proof of the proposition is immediate. 
Instead we give a proof that uses only Kesten's result \eqref{kesten}.

Since for fixed $j$, 
\[  \sum_{\eta^1,\ldots,\eta^j \in \ibridge}
  \beta_c^{|\eta^1| + \cdots + |\eta^j|} = 1, \]
it suffices to show for every $\omega^1,\ldots,\omega^j\in\ibridge $
\[    \liminf_{n \rightarrow \infty}
     \Prob^n [\hwalk(\omega^1,\ldots,\omega^j)]
    \geq \beta_c^m . \]

We will assume that $m$ is even.
The case $m$ odd is done the same way. From
\eqref{kesten} and \eqref{mar20.1}, we see that
\[   \lim_{n \rightarrow \infty}
   \Prob^{2n}[\hwalk(\omega^1,\ldots,\omega^j)]
   = \beta_c^m . \]
We need to prove also that
\begin{equation}  \label{mar20.3}
\liminf_{n \rightarrow \infty}
   \Prob^{2n+1}[\hwalk(\omega^1,\ldots,\omega^j)]
   \geq  \beta_c^m . 
\end{equation}
 Suppose
$\eta^1,\ldots,\eta^k \in \ibridge$ with
$|\eta^1|,\ldots,|\eta^{k-1}|$ even and $
l = |\eta^1| + \cdots + |\eta^k|$
odd. Then, by the same reasoning
\begin{equation}  \label{mar20.2}
 \lim_{n \rightarrow \infty}
 \Prob^{2n+1}[\hwalk(\omega^1,\ldots,\omega^j,
\eta^1,\ldots,\eta^k)]
   = \beta_c^{m + l}. 
\end{equation}
Note that
\begin{equation}\label{shaneedit2}  
{\sum}^\dagger \, \beta_c^{|\eta^1| + \cdots + |\eta^k|} = 1, \end{equation}
where the sum ${\sum}^\dagger$ is over all $k$ and all $\eta^1,\ldots,
\eta^k \in \ibridge$ 
such that $|\eta^1|, \ldots, |\eta^{k-1}|$ are
even and $|\eta^k|$ is odd.
 (Indeed, this is what one gets if
one selects $\eta \in \ibridge$  from the distribution
$Q$ until one gets a bridge $\eta$
 of odd length.  Since the
probability of getting an $\eta$ of odd length at
each step is $\rho > 0$ one gets one with probability one.)
Similarly,
\[ \Prob^{2n+1}[\hwalk(\omega^1,\ldots,\omega^j)]
   \geq {\sum}^\dagger \,
\Prob^{2n+1}[\hwalk(\omega^1,\ldots,\omega^j,
\eta^1,\ldots,\eta^k)].\]
where the sum is over $k$ and $\eta^1,...,\eta^k$ subject to the 
same constraints as before.
Using this, \eqref{mar20.2} and \eqref{shaneedit2} we get \eqref{mar20.3}.
(The reader may observe that 
the interchange of limits only allows a
statement about the liminf rather than a statement about
the limit, but we have noted that this suffices.)
\end{proof}

\begin{definition}  If $A \subset \walk$ is any set
of SAW's, define
\[          M(A) = \lim_{n \rightarrow \infty}
       \frac{\#\{\omega \in A: |\omega| = n \}}{Y_n}, \]
assuming the limit exists.
\end{definition}

Using Proposition \ref{mar20.prop2} and a proof as in the previous
proposition, we can show that
\[      M(\hwalk^{ik}) = M(\hwalk^{x+ik}) = \tilde I(\beta_c,k) .\]
Similarly, as in Proposition \ref{mar20.prop3}, we have
\[     M(\hwalk^{ik,V}) = \tilde I(\beta_c,k) \, \Prob_{\beta_c,ik}
    [\hwalk^{ik,V}]. \]

\subsection{SAW in a strip}
\label{sect_saw_strip}

In the previous section we considered the self-avoiding walk in 
the half plane. For every such walk the self-avoidance implies that 
$\lim_{n \rightarrow \infty} |\omega_n| =\infty$. 
So the probability measure on the 
infinite walk is supported on walks between $0$ and $\infty$,  
two boundary points of the upper half plane.  
In this section we consider the SAW in a strip starting 
at one boundary point and ending either at a fixed boundary 
point on the other side of the strip or at any boundary 
point on the other side of the strip.

We fix a positive integer $y$ and consider the strip 
\[
 \strip_y = \{z :  0 < \Im(z) < y\} 
\]
Fix a point $x+iy$ on the upper boundary of the strip. The 
SAW from $0$ to $x+iy$ in the strip is the probability measure 
on SAW's that start at $0$, end at $x+iy$ and in between stay in the strip 
that is defined as follows. 

\begin{definition} 
The probability measure
$\Q_x^y$ on the SAW's in the strip $\strip_y$ from $0$ to $x+iy$ 
is defined by 
\[
\Q_x^y(\omega) = \frac{1}{Z_{\strip_y}(0,x+iy)} \beta_c^{|\omega|} 
\]
where $Z_{\strip_y}(0, x+iy)$ is defined by the requirement that this be 
a probability measure. 
\end{definition} 

This definition only makes sense if $Z_{\strip_y}(0,x +iy)$ is finite. 
Note that if we take one of the SAW's we are considering and 
ignore the last step, then we have a bridge with height $y-1$. 
It need not be irreducible, but it can be decomposed into irreducible 
bridges. Let $n$ be the number of irreducible bridges. 
The sum of $\beta_c^{|\omega|}$ over the $\omega$ in $Z_{\strip_y}(0,x +iy)$ that have 
exactly $n$ irreducible bridges is bounded by $1$ by Kesten's relation
(Prop. \ref{kesten_prob}).
Since $n$ is at most $y-1$, it follows that $Z_{\strip_y}(0,x+iy)$ is finite.

We can also consider SAW's in the strip that start at $0$ and end 
at any point $x+iy$ on the upper boundary. We define a probability 
measure $\Q^y$ on this set of walks in the obvious way.
\begin{definition} 
$\Q^y$ is the probability measure on the SAW's in the strip $\strip_y$ 
from $0$ to some point $z$ with $\Im[z]=y$ given by 
\[
\Q^y(\omega) = \frac{1}{Z_{\strip_y}(0,\integers +iy)} \beta_c^{|\omega|} 
\]
where $Z_{\strip_y}(0,\integers +iy)$ is defined by the requirement 
that this be a probability measure. 
\end{definition} 
The same argument that shows $Z_{\strip_y}(0,x +iy)$ is finite shows 
$Z_{\strip_y}(0,\integers +iy)$ is finite. 
Note that we use a superscript on $\Q$ for probability measures on 
walks that end anywhere on the upper boundary of the strip and both a
subscript and a superscript on $\Q$ for probability measures that 
end at a prescribed point on the 
boundary. Obviously, $\Q_x^y$ can be obtained from $\Q^y$ by conditioning 
on the event that the walk ends at $x+iy$. 

Recall that an infinite walk $\omega$ in the half plane has a cut at 
level $y$ if there is a bridge $\omega_1$ with 
height $y$ and an infinite half plane walk $\omega_2$ such that 
$\omega=\omega_1 \bcat \omega_2$.
(Note that this implies that the first bond in $\omega_2$ is the 
only bond in the walk that goes between height $y$ and $y+1$.)
The key to our simulations is the following observation. 

\begin{proposition} \label{half_to_strip}
Let $y$ be a positive integer. Let $Q^\infty$ be the probability measure on 
infinite SAW's in the half plane from the previous sections. If we condition
$Q^\infty$ on the event that the walk has a cut at level $y-1$ 
and only consider the walk up to height $y$, 
then the resulting probability measure is $\Q^y$.
\end{proposition}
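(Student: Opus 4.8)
\section*{Proof proposal}

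The plan is to exhibit a weight‑preserving bijection between the walks carrying $\Q^y$ and the walks carrying the conditioned measure, and then to match the weights. First I would record the elementary structure of the strip walks. If $\omega$ is a SAW from $0$ to the top boundary of $\strip_y$ that otherwise stays in $\strip_y$, then every vertex except the last lies at height $\le y-1$; since the last vertex sits at height $y$, its only neighbor among the earlier vertices is the one directly below it, so the final step is straight up and deleting it produces a bridge $B$ with $h(B)=y-1$. Conversely, every bridge $B$ with $h(B)=y-1$ yields such a strip walk $B\bcat[\,(y-1)i,\,yi\,]$, and self‑avoidance is automatic because $B$ lies at heights $\le y-1$. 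This is a bijection with $|\omega|=|B|+1$, so $\Q^y(\omega)=\beta_c^{|\omega|}/Z_{\strip_y}(0,\integers+iy)$ corresponds, up to the overall constant $\beta_c$, to the weight $\beta_c^{|B|}$ on height‑$(y-1)$ bridges.

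The core step is to read the conditioned measure off the i.i.d.\ structure of $Q^\infty$. Write the $Q^\infty$-walk as $\gamma=\omega^1\bcat\omega^2\bcat\cdots$ with the $\omega^i\in\ibridge$ independent and $Q$-distributed, and set $H_i=h(\omega^1)+\cdots+h(\omega^i)$. I claim the levels at which $\gamma$ has a cut are exactly $\{H_1,H_2,\dots\}$. Indeed, at the point ending $\omega^1\bcat\cdots\bcat\omega^i$ the walk so far is a bridge of height $H_i$ and hence lies at heights $\le H_i$, while $\omega^{i+1}\bcat\cdots$ is a half-plane walk based there and so lies strictly above $H_i$; moreover the first step of $\omega^{i+1}$ is forced to be straight up, so the bond from $H_i$ to $H_i+1$ is the unique bond crossing that level, which is precisely a cut at level $H_i$. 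Conversely, a cut at level $k$ writes $\gamma=\gamma_1\bcat\gamma_2$ with $\gamma_1$ a bridge of height $k$ and $\gamma_2$ going up; decomposing $\gamma_1$ and $\gamma_2$ into irreducible bridges and invoking the uniqueness of the decomposition \eqref{renew} shows the cut point must be some $H_i$. Consequently $\gamma$ has a cut at level $y-1$ iff $H_i=y-1$ for some $i$, and in that case the portion of $\gamma$ up to height $y$, with its final upward step removed, is exactly $B:=\omega^1\bcat\cdots\bcat\omega^i$, a bridge of height $y-1$.

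Finally I would compute the weights. Fix a bridge $B$ with $h(B)=y-1$ and let $B=\beta^1\bcat\cdots\bcat\beta^r$ be its unique irreducible decomposition. By the previous paragraph, the event that $\gamma$ has a cut at level $y-1$ with below‑part $B$ is exactly $\{\omega^1=\beta^1,\dots,\omega^r=\beta^r\}$, whose $Q^\infty$-probability is $\prod_{s=1}^r Q(\beta^s)=\beta_c^{|\beta^1|+\cdots+|\beta^r|}=\beta_c^{|B|}$. These events are disjoint over distinct $B$, so $Q^\infty(\gamma$ has a cut at level $y-1)=\sum_{B}\beta_c^{|B|}$, which is positive (it dominates the weight $\beta_c^{y-1}$ of the straight vertical bridge) and finite. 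Conditioning thus gives the below‑part the law $\beta_c^{|B|}\big/\sum_{B'}\beta_c^{|B'|}$; transporting through the bijection of the first paragraph and using $Z_{\strip_y}(0,\integers+iy)=\beta_c\sum_{B'}\beta_c^{|B'|}$ turns this into $\beta_c^{|\omega|}/Z_{\strip_y}(0,\integers+iy)=\Q^y(\omega)$, as desired. The one point demanding genuine care is the identification of cuts with renewal points, i.e.\ checking that the cut decomposition of $\gamma$ is forced to coincide with its irreducible‑bridge decomposition; everything else is bookkeeping with the $\bcat$ operation.
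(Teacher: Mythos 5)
Your proof is correct and follows essentially the same route as the paper's: both exploit the i.i.d.\ irreducible-bridge structure of $Q^\infty$ to write the cut event as a disjoint union of cylinder events of $Q^\infty$-probability $\beta_c^{|B|}$, indexed (equivalently, via unique decomposition) by bridges $B$ of height $y-1$, and then match weights with $\Q^y$ through the append-a-vertical-bond bijection. The only differences are matters of care rather than of method: you actually prove the identification of cut levels with renewal heights, which the paper asserts implicitly in its disjoint-union decomposition, and you track the normalizing constants and the positivity of the conditioning event explicitly instead of arguing up to proportionality.
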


\begin{proof}
Let $E_{y-1}$ be the event that $\omega$ has a cut at level $y-1$. 
Recall that for irreducible bridges $\omega^1,\ldots,\omega^j$, 
$\hwalk^\infty(\omega^1,\ldots,\omega^j)$ is the set of infinite walks that 
begin with $\omega^1 \bcat \cdots \bcat \omega^j$. 
We can write $E_{y-1}$ as a disjoint union of these events:
\begin{equation}
E_{y-1} = \bigcup_{j=1}^{y-1}\;\; \bigcup_{\omega^1,\ldots,\omega^j \in \ibridge: 
h(\omega^1) + \cdots + h(\omega^j)=y-1} \,
\hwalk^\infty(\omega^1,\ldots,\omega^j)
\end{equation}
By the definition of $Q^\infty$, 
\[
Q^\infty[\hwalk^\infty(\omega^1,\ldots,\omega^j)]=
\beta_c^{|\omega^1|+\cdots+|\omega^j|}
\]
So when we condition on $E_{y-1}$, the probability of 
$\hwalk^\infty(\omega^1,\ldots,\omega^j)$ is proportional to 
$\beta_c^{|\omega^1|+\cdots+|\omega^j|}$.
If we only consider the walk up to height $y$, then each 
$\hwalk^\infty(\omega^1,\ldots,\omega^j)$ corresponds to a single walk, namely, 
$\omega^1 \bcat \cdots \bcat \omega^j$, concatenated with a vertical 
bond from height $y-1$ to $y$. 
So under the conditioning given in the proposition, 
the probability of a walk in the strip is proportional to 
$\beta_c^{|\omega^1|+\cdots+|\omega^j|}$.
This is what its probability should be under $\Q^y$, thus 
proving the proposition. 
\end{proof}

\section{Simulations}
\label{sect_sim}

\subsection{SLE partition functions}
\label{sect_sim_part_func}

The pivot algorithm provides a fast Markov chain Monte Carlo algorithm
for simulating the SAW in the full plane or the half plane. 
(For an introduction to this algorithm see \cite{Madras}. For the 
fastest implementation of the algorithm to date see \cite{clisby}.)
However, the pivot algorithm cannot be used for the SAW in 
most simply connected domains. 
The algorithms that do work in these cases are much slower 
than the pivot algorithm. Proposition  \ref{mar19.prop1} provides 
the key to a fast simulation of the SAW in a strip of height $y$.
We simulate the SAW in the half plane and condition on the event 
that the walk has a cut at level $y-1$. The probability of this 
event is small, but still large enough to allow us to generate
large numbers of samples. Proposition \ref{half_to_strip} says that 
the distribution of the portion of the walk in the strip of 
height $y$ is $\Q^y$. 

Of course, the scaling limit of $\Q^y$ is not chordal SLE. 
We could condition further on the event that the 
walk ends at a particular point $x+iy$ to get
$\Q_x^y$, whose scaling limit should be chordal SLE. 
However, this would be conditioning on an event with 
very small probability. Instead we take the following approach. Let 
$\rho(x)$ be the scaling limit of the 
density of the unique point $x+iy$ on the walk at height $y$
under the measure $\Q^y$. If we integrate chordal SLE$_{8/3}$
between $0$ and $x+i$ in the strip of height 1
against this density, then we should 
obtain the scaling limit of $\Q^y$. So if we can compute
$\rho(x)$, then we can use known results about chordal SLE to 
make predictions for the scaling limit of $\Q^y$.
We will derive a conjecture for $\rho(x)$ using SLE partition functions.

For a simply connected domain $D$ and points $z,w$ on its boundary, we define
the SLE partition function $H_D(z,w)$ by requiring that it 
be conformally covariant in the following sense. If $\Phi$ is 
a conformal map on $D$, then 
\begin{equation}
H_D (z,w)  = 
|\Phi^\prime(z) \Phi^\prime(w)|^{5/8} 
H_{\Phi(D)}(\Phi(z),\Phi(w)) 
\label{conf_cov}
\end{equation}
This defines $H_D(z,w)$ up to specifying its value for one particular 
choice of $D,z,w$. We follow the usual convention of taking 
$H_\Half(0,1)=1$. 
This partition function is believed to be related to the 
total mass of the scaling limit of SAW's in $D$ from $z$ to $w$. 
This statement must be interpreted with caution as there are 
lattice effects that persist in the scaling limit.
We will motivate this conjecture by arguing that it is true in 
two special cases. 

In the first special case, $D$ will be $\Half$ and $\Phi$ will just 
be a dilation.
Let 
\[
Z_\Half(0,n) = \sum_{\omega:0 \rightarrow n, \omega \subset \Half} \beta_c^{|\omega|} 
\nonumber
\]
The sum is over all self-avoiding walks that start at $0$, end 
at $n$  and stay in the upper half plane except for their endpoints.
We assume that there is an exponent $\scaleexp$ such that 
the limit 
\begin{equation}
c=\lim_{n \rightarrow \infty} Z_\Half(0,n) n^\rho
\label{saw_weight}
\end{equation}
exists. There is not a proof that $Z_\Half(0,n)$ is finite.  
Now let $x >0$. Let $[nx]$ denote the integer closest to $x$. 
We have
\[
\lim_{n \rightarrow \infty} Z_\Half(0,[nx]) n^\scaleexp = 
\lim_{n \rightarrow \infty}  Z_\Half(0,[nx]) [nx]^\scaleexp 
\frac{n^\scaleexp}{[nx]^\scaleexp}
= \frac{c}{x^\scaleexp}
\nonumber
\]
This shows that 
\[
\lim_{n \rightarrow \infty} \frac{ Z_\Half(0,[nx])}{Z_\Half(0,n)} = 
\frac{1}{x^\rho}
\label{scaling}
\]
If we use  \eqref{conf_cov} with $\Phi$ equal to a dilation, then 
we have 
\begin{equation}
\frac{H_\Half(0,x)}{H_\Half(0,1)} = H_\Half(0,x)= \frac{1}{x^{5/4}}.
\label{dilation_case}
\end{equation}
So if \eqref{saw_weight} is true with $\rho=5/4$, then 
$H_{\Half}(0,x)$ does indeed give the relative weight of the scaling 
limit of the SAW from $0$ to $x$ in $\Half$.

For the argument for the second special case of \eqref{conf_cov} we use the 
following result of Lawler, Schramm and Werner \cite{LSW_restriction}.
\begin{theorem}\label{LSWthm1}
Suppose $A\subset\Halfbar$ is compact and $\Half\setminus A$ is 
simply connected with $0\notin A$.  
Let $\Prob_{\Half,0,\infty}$ denote the probability measure on simple
curves in $\Half$ from $0$ to $\infty$ given by chordal SLE$_{8/3}$. 
If $\Phi_A:\Half\setminus A \rightarrow \Half$ denotes the unique 
conformal map that fixes $0$ and $\infty$ and has $\Phi_A'(\infty)=1$, 
then 
\begin{equation}\label{LSWeq1}
\Prob_{\Half,0,\infty}[\gamma \cap A = \emptyset]=\Phi_A'(0)^{5/8}
\end{equation}
\end{theorem}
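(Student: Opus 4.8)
The plan is to establish the identity through an Itô-calculus martingale argument, using the domain Markov property and conformal invariance of chordal SLE$_{8/3}$ to reduce the global avoidance probability to a drift computation along the Loewner flow. Let $(g_t)$ be the chordal Loewner maps for $\gamma$, solving $\partial_t g_t(z) = 2/(g_t(z)-W_t)$ with $W_t = \sqrt{8/3}\,B_t$, and let $\tau$ be the first time $\gamma$ meets $A$. For $t<\tau$ the hull $A$ stays in the unbounded component of $\Half\setminus\gamma[0,t]$, so $A_t := g_t(A)$ is again a compact hull with $\Half\setminus A_t$ simply connected and $W_t\notin A_t$. Let $\phi_t := \Phi_{A_t}$, normalized as in the statement; by Schwarz reflection $\phi_t$ extends analytically across a real neighborhood of $W_t$, so $\phi_t'(W_t),\phi_t''(W_t),\phi_t'''(W_t)$ are well defined (and, since derivatives are unchanged by the real translation relating this normalization to the hydrodynamic one, agree with the hydrodynamic values). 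The candidate martingale is $M_t = \phi_t'(W_t)^{5/8}$, with initial value $M_0 = \Phi_A'(0)^{5/8}$ because $g_0=\mathrm{id}$, $A_0=A$, and $W_0=0$.

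First I would derive the evolution equation for $\phi_t$. The composition $\tilde g_t := \phi_t\circ g_t$ uniformizes $\Half\setminus(\gamma[0,t]\cup A)$, and $\tilde g_t\circ\Phi_A^{-1}$ is, up to a time-dependent real translation that does not affect derivatives, the capacity-reparametrized Loewner flow of the curve $\Phi_A(\gamma)$, with driving point $\phi_t(W_t)$ and $\tfrac{d}{dt}\hcap = 2\,\phi_t'(W_t)^2$. Matching $\partial_t\tilde g_t$ computed two ways and writing $w=g_t(z)$ gives
\[
   \partial_t\phi_t(w) = \frac{2\,\phi_t'(W_t)^2}{\phi_t(w)-\phi_t(W_t)} - \frac{2\,\phi_t'(w)}{w-W_t}.
\]
Taylor expanding both sides about $w=W_t$, the simple poles cancel, and reading off the coefficient of $(w-W_t)$ yields $\partial_t\phi_t'(W_t) = (\phi_t'')^2/(2\phi_t') + (\tfrac{\kappa}{2}-\tfrac43)\phi_t'''$, all derivatives at $W_t$. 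For $\kappa=8/3$ the coefficient $\tfrac{\kappa}{2}-\tfrac43$ vanishes --- this is precisely where the value $8/3$ enters --- leaving $\partial_t\phi_t'(W_t) = (\phi_t''(W_t))^2/(2\phi_t'(W_t))$.

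Next I would apply Itô's formula. Since $d\langle W\rangle_t = \tfrac83\,dt$, one gets
\[
   d\phi_t'(W_t) = \frac{(\phi_t'')^2}{2\phi_t'}\,dt + \phi_t''\,dW_t,
\]
and then for $M_t=\phi_t'(W_t)^{\alpha}$ the drift equals $\tfrac{\alpha}{2}(\phi_t')^{\alpha-2}(\phi_t'')^2\bigl[1+(\alpha-1)\kappa\bigr]$, which vanishes exactly when $\alpha = 1-1/\kappa = 5/8$. Hence $M_t$ is a local martingale; since $0\le\phi_t'(W_t)\le 1$ (removing the hull cannot increase the boundary derivative at a point outside it), $M_t$ is in fact a bounded martingale, and optional stopping gives $\E[M_{t\wedge\tau}] = \Phi_A'(0)^{5/8}$ for every $t$.

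It remains to identify the limit $M_\infty = \lim_t M_{t\wedge\tau}$, and this boundary analysis is the step I expect to be the main obstacle. On $\{\tau<\infty\}=\{\gamma\cap A\neq\emptyset\}$ one must show $\phi_t'(W_t)\to 0$ as $t\uparrow\tau$, i.e.\ the tip derivative degenerates as $W_t$ approaches $A_t$; this should follow from a Koebe/distortion estimate once $\dist(W_t,A_t)\to 0$ is controlled. On $\{\tau=\infty\}=\{\gamma\cap A=\emptyset\}$ one must show $\phi_t'(W_t)\to 1$, i.e.\ the evolved hull $A_t$ recedes (its capacity and its diameter as seen from $W_t$ shrink) so that $\phi_t$ tends locally to the identity near $W_t$; this is where transience of the curve and size estimates for $A_t$ under the flow are needed, and is the delicate point. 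Granting $M_\infty = \mathbf 1_{\{\gamma\cap A=\emptyset\}}$, bounded convergence yields $\Prob_{\Half,0,\infty}[\gamma\cap A=\emptyset] = \E[M_\infty] = M_0 = \Phi_A'(0)^{5/8}$, completing the argument.
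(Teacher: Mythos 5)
The paper does not actually prove this statement: it is imported verbatim, by citation, as a theorem of Lawler, Schramm and Werner from \cite{LSW_restriction}, so there is no in-paper argument to compare yours against. What you have written is, in outline, the proof from that cited reference (the chordal restriction property of SLE$_{8/3}$). Your core computations are correct: the evolution equation for $\phi_t=\Phi_{A_t}$, the It\^o drift $\tfrac{\alpha}{2}(\phi_t')^{\alpha-2}(\phi_t'')^2\bigl[1+(\alpha-1)\kappa\bigr]$ for $\phi_t'(W_t)^\alpha$, the resulting exponent $\alpha=1-1/\kappa=5/8$, and the bound $0\le\phi_t'(W_t)\le 1$ that upgrades the local martingale to a bounded true martingale. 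One presentational slip: the coefficient of $(w-W_t)$ in the Taylor expansion of $\partial_t\phi_t(w)$ is $(\phi_t'')^2/(2\phi_t')-\tfrac43\phi_t'''$, with no $\kappa$ in it; the $\tfrac{\kappa}{2}\phi_t'''$ term appears only when It\^o's formula is applied to $t\mapsto\phi_t'(W_t)$, so the deterministic time-derivative and the It\^o-corrected drift should not be presented as one and the same identity (your final formulas are nevertheless the right ones at $\kappa=8/3$).

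The genuine gap is the one you flag and then assume: the identification $M_\infty=\mathbf 1_{\{\gamma\cap A=\emptyset\}}$. This is not a routine afterthought but the bulk of the technical work in \cite{LSW_restriction}. On the event that $\gamma$ avoids $A$ one must show $\phi_t'(W_t)\to 1$, which requires proving that the evolved hulls $A_t=g_t(A)$ shrink relative to their distance from $W_t$ as $t\to\infty$; this uses transience of the curve together with quantitative capacity and derivative estimates, and a Koebe distortion bound alone will not deliver it. On the complementary event one needs $\phi_t'(W_t)\to 0$ as $t\uparrow\tau$, which also requires care depending on whether $\gamma$ approaches $A$ near the real line or in the interior. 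Since you explicitly write ``granting'' this step, your proposal is an accurate roadmap of the known proof rather than a complete one; to close it you would either reproduce those boundary estimates of Lawler--Schramm--Werner or simply cite the theorem, which is exactly what the paper itself does.
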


Let $D^\prime$ be a simply connected domain and $z,w$ two points on 
its boundary. Let $D$ be a simply connected domain which is a
subset of $D^\prime$ and such that $z,w$ also belong to the boundary of $D$. 
Let $\Phi$ be a conformal map of $D$ to $D^\prime$ which fixes $z$ and $w$. 
Then the definition of the SLE partition function says
\begin{equation}
H_D (z,w)  = 
|\Phi^\prime(z) \Phi^\prime(w)|^{5/8} H_{D^\prime}(z,w) 
\end{equation}
Now consider the total mass of the SAW in $D$ from $z$ to $w$ divided 
by the total mass of the SAW in $D^\prime$ from $z$ to $w$. This ratio
gives the probability that a SAW in $D^\prime$ from $z$ to $w$ remains in 
the smaller domain $D$. This should be given by the probability
that a chordal SLE$_{8/3}$ in $D^\prime$ from $z$ to $w$ remains in 
$D$, i.e., 
by $\Prob_{D^\prime,z,w}(\gamma \cap (D^\prime \setminus D) = \emptyset)$, 
where $\Prob_{D^\prime,z,w}$ denotes this chordal SLE measure.
We can compute this probability using Theorem \ref{LSWthm1}. 
We will show that it is given by $|\Phi^\prime(z) \Phi^\prime(w)|^{5/8}$, 
i.e., by the ratio of the SLE partition functions 
$H_D(z,w) / H_{D^\prime}(z,w) $.

Let $\psi(z)$ be a conformal map of $D^\prime$ to $\Half$ with $\psi(z)=0$ and 
$\psi(w)=\infty$. 
By the conformal invariance of SLE, 
\begin{equation}
\Prob_{D^\prime,z,w}(\gamma \cap (D^\prime \setminus D) = \emptyset) = 
\Prob_{\Half,0,\infty}(\gamma \cap \psi(D^\prime \setminus D) = \emptyset) .
\end{equation}
Let $\phi = \psi \circ \Phi \circ \psi^{-1}$.  Then $\phi$ maps  
$\Half \setminus \psi(D^\prime \setminus D)$ to $\Half$ and fixes $0$ and 
$\infty$. So it satisfies the hypotheses of Theorem \ref{LSWthm1} 
except that its derivative at $\infty$ need not be $1$. 
An easy computation shows 
$\phi(z) \sim z/\Phi^\prime(w)$ as $z \rightarrow \infty$. 
So $z \mapsto \Phi^\prime(w) \phi(z)$ satisfies the hypotheses. 
The chain rule shows $\phi^\prime(0)=\Phi^\prime(z)$, so 
\begin{equation}
\Prob_{\Half,0,\infty}(\gamma \cap \psi(D^\prime \setminus D) = \emptyset) 
= |\Phi^\prime(z) \Phi^\prime(w)|^{5/8} 
\end{equation}
Thus we have derived \eqref{conf_cov} for the special case that
$\Phi(D)$ is a superset of $D$ with $z,w$ on its boundary too, 
and $\Phi$ fixes $z$ and $w$. 

\subsection{Boundary density for the strip}

Our simulations are all for the SAW in a strip. 
We use $\strip$ to denote the strip of height $1$, i.e., 
$\{z\in\Half: 0 < \Im(z) < 1\}$.  
We use SLE partition functions to compute the conjecture for 
the boundary density for the scaling limit of the SAW in $\strip$
starting at $0$ and ending somewhere on the upper boundary of the strip.
So we need to compute $H_\strip(0,x+i)$. 
Let $f(z)=e^{\pi z}-1$. This sends the strip to the upper half plane, 
sending $0$ to $0$ and $x+i$ to $-e^{\pi x}-1$. 
We have $f^\prime(0)=\pi $, $f^\prime(x+i)= -\pi e^{\pi x}$. 
So using \eqref{conf_cov} and the special case \eqref{dilation_case}, 
we have
\begin{eqnarray*}
H_\strip(0,x+i) &=& (\pi^2 e^{\pi x})^{5/8} \, H_\Half(0,-e^{\pi x} -1) \\
&=& \left[ \frac{ \pi^2 e^{\pi x}}{(e^{\pi x} + 1)^2} \right]^{5/8}
= \left[ \frac{\pi^2}{4 \cosh^2(\pi x/2)} \right]^{5/8}
\end{eqnarray*}
Thus our conjecture for the density for the SAW in the strip from $0$ 
to the point $x+i$ on the upper boundary is 
\begin{equation}\label{RealDensityEq}
\rho(x) = c \left[\cosh\left(\frac{\pi}{2} x \right)\right]^{-5/4}, \quad 
-\infty < x < \infty
\end{equation} 
where the constant $c$ normalizes the density.

Our simulations generated SAW's in the half plane with $N=10,000$ steps.
We condition on the event that the walk has a cut at a fixed level $h$ 
which we take to be $h=0.2 N^{3/4}=200$. 
In practice this means that we run the pivot algorithm to generate a 
sequence of SAW's. These walks are highly correlated, so we only 
look at the walk every $100,000$ steps of the Markov chain to see if 
the walk has a cut at the prescribed level. If it does, we use it 
to compute a sample of the random variables we are studying. 
We generated a total of $496,000$ samples. 
For the particular $N$ and $h$ that we used, approximately 
$27 \%$ of the SAW's have a cut at the prescribed height. 
We expect this fraction to decrease to zero as $h$ goes to infinity. 

Figure \ref{exitdistx} shows our test of conjecture 
\eqref{RealDensityEq} for the boundary density. 
We plot the cumulative distribution for this density 
for a simulation of the SAW and for the conjectured density.
The two curves are indistinguishable on the scale of the figure. 
The left inset shows a blow-up of a section of the two curves 
to show the size of the difference. The right inset is a plot of the difference
of the theoretical and empirical distribution. This difference is 
typically on the order of $1/10$ of a percent. The very jagged nature
of this curve is a reflection of the effect of the nonzero lattice 
spacing. On a small scale the simulation is actually simulating a 
discrete random variable. 

\begin{figure}[!h]
\includegraphics{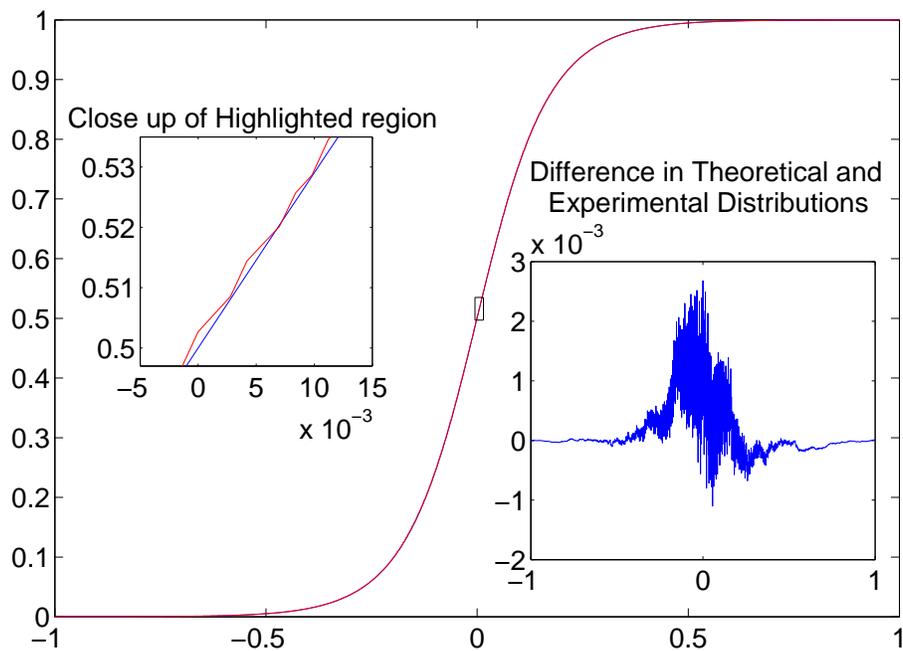}
\caption{\leftskip=25 pt \rightskip= 25 pt 
Comparison of simulations of boundary density for the SAW in a strip and 
the conjectured density using SLE partition functions.
}
\label{exitdistx}
\end{figure}

\subsection{Passing left of a point}

Now that we have found $\rho(x)$, we can use it together with
results for chordal SLE$_{8/3}$ to make predictions for the 
scaling limit of $\Q^y$. 
The probability of certain random events depending on the geometry 
of the SLE$_{8/3}$ curve can be explicitly computed.  
We have already seen one such formula in Theorem
\ref{LSWthm1}.

The second formula for chordal SLE that we will use is Schramm\rq s 
{\it left-passing probability} of a point $z\in \Half$ with 
respect to the SLE$_\kappa$ generating curve \cite{Schramm2}.  
(Schramm called this 
a \lq left crossing probability,\rq \ 
but we use slightly different terminology here.)  
Schramm's formula applies to any $\kappa\in(0,8)$.
The definition of left-passing is given in terms of winding numbers.  
For $\kappa=8/3$, the generating curve is a simple curve;  
an equivalent definition of left-passing is 
\begin{equation}
\Prob_{\Half,0,\infty}[\gamma \text{ passes left of } z]
=\Prob_{\Half,0,\infty}[z\in H_\infty^+]
\nonumber
\end{equation}  
where $H_\infty^+$ is defined to be the connected component of 
$\Halfbar \setminus\gamma[0,\infty)$ that contains 
$\reals^+:=\{x\in\reals:x>0\}$. Since $\gamma$ is simple and 
$\gamma(t) \rightarrow \infty $ w.p.1, $\Halfbar\setminus\gamma[0,\infty)$ 
has exactly two connected components.
For $\kappa = 8/3$, the formula reduces to 
\begin{equation}\label{83CrossProb}
\Prob_{\Half,0,\infty}[\gamma \text{ passes left of } z] 
= \frac{1}{2}[1 + \cos(\arg(z))]. 
\end{equation}

For the SAW in the half plane, simulations were compared with the 
left-crossing formula and with several applications of 
Theorem \ref{LSWthm1} in \cite{Kennedya,Kennedyb}.
Excellent agreement was found. We emphasize that these past Monte 
Carlo tests of SLE predictions for the SAW have all been for the 
SAW in the half plane or in a slit full plane. The simulations in 
this paper are the first tests of SLE predictions for the SAW in a strip.
More significantly, they are the first tests of the SLE partition function 
prediction \eqref{RealDensityEq}.

Recall that $\strip=\{z\in\Half: 0 < \Im (z) < 1\}$.  
The  map
\begin{equation}\label{StripToHalfPlaneMap}
 \Psi_\xi(z) = \frac{e^{\pi z}-1}{e^{\pi z}+e^{\pi \xi}}
\end{equation}
maps $\strip$ to $\Half$ with 
$\xi+i\mapsto \infty$ and $0\mapsto 0$.  
We use this map to transform probabilities involving chordal SLE 
in the strip into chordal SLE in the half plane. 

If $z$ is a point in the interior of the strip $\strip$,
then 
\begin{eqnarray*}
\Prob_{\strip,0,x+i}(\gamma \text{ passes left of } z) 
 &=& \Prob_{\Half,0,\infty}(\gamma \text{ passes left of } \Psi_x(z))  \\
 &=&  \frac{1}{2}[1 + \cos(\arg(\Psi_x(z)))]. 
\end{eqnarray*}
So we expect that 
\[
\lim_{y \ra \infty} \Q^y \left( \frac{\omega}{y}  \text{ passes left of } z \right) 
= \int_{-\infty}^\infty  \frac{1}{2}[1 + \cos(\arg(\Psi_x(z)))] \, \rho(x) \, dx
\]

In our simulations we sample the event that the walk passes left of $z$ 
for a grid of values of $z$. 
The grid is $400$ points wide horizontally and $100$ points wide vertically.
We then use the probabilities of these events to 
compute a contour plot for the probability of the curve passing left as 
a function of $z$. 
Figure \ref{passright} shows the comparison of the theoretical probabilities
and the simulations. Contour plots were generated for the contours for 
probabilities $0.1,0.2,0.3, \dots,0.8,0.9$. 
The solid lines are the theoretical contours and the circle are points on 
the empirical contours at $y$ values of $0.05,0.10,0.15, \dots, 0.95$. 

\begin{figure}[!h]
\includegraphics{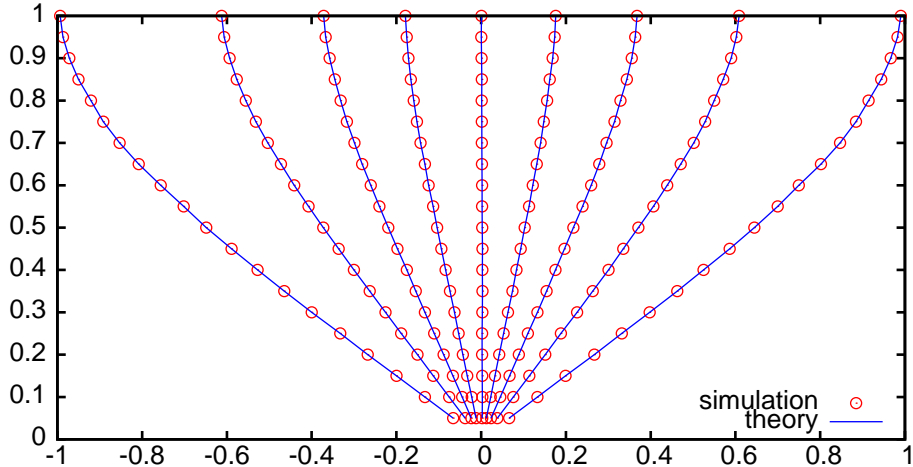}
\caption{\leftskip=25 pt \rightskip= 25 pt 
Comparison of simulations of the probability of passing left
for the SAW in a strip and the conjectured probability function using SLE. 
The plot is a contour plot in the strip which 
shows the curves where the probability is 0.1,0.2,...,0.9.
}
\label{passright}
\end{figure}

\subsection{Right-most excursion}

Our next test considers the right-most point on the SAW in the strip, i.e., 
the random variable 
\[
\max_{0\leq j\leq |\omega|} \Re \, \omega_j
\]
We conjecture that in the scaling limit its distribution is given by 
\[ \label{rightmostconj}
\lim_{y \ra \infty} \Q^y \left( 
\max_j \frac{\Re \, \omega_j}{y} 
\le x \right) = \int_{-\infty}^\infty \Prob_{\strip,0,\xi+i} 
(\max_t \Re \, \gamma(t) \leq x) \, \rho(\xi) \, d\xi
\]
where $\Prob_{\strip,0,\xi+i}$ denotes the probability measure for chordal 
SLE$_{8/3}$ in the strip $\strip$ from $0$ to $x+i$. 

To compute $\Prob_{\strip,0,\xi+i} (\max_t \Re \, \gamma(t) \leq x)$, we 
first note that this probability is zero if $\xi > x$. 
For $\xi < x$, we observe that the random variable is $<x$ if and only if 
the curve does not hit the portion of the strip given by $\Re \, z \ge x$. 
So for $\xi < x$ we have 
\begin{eqnarray*}
 \Prob_{\strip,0,\xi+i} \left(\max_t \Re \, \gamma(t) < x \right)
  &=& \Prob_{\strip,0,\xi+i} 
  \left(\gamma \cap \{z \in \strip: \Re \, z \geq  x \} 
  = \emptyset \right) \\
  &=& \Prob_{\Half,0,\infty} (\gamma
   \cap \Psi_\xi(\{z\in \strip: \Re \, z \geq x\}) = \emptyset)\\
  &=& \Prob_{\Half,0,\infty} 
    (\gamma \cap \{z\in\Half: |z-a(x;\xi)|\leq c(x;\xi)\} = \emptyset),
\end{eqnarray*}
where $a(x;\xi)$ and $c(x;\xi)$ are given by,
\begin{eqnarray*}
a(x;\xi)&=&
   \frac{1}{2}\left(\frac{\exp{\pi x}-1}{\exp{\pi x}+\exp{\pi \xi} }
  +\frac{\exp{\pi x}+1} {\exp{\pi x}-\exp{\pi \xi} }\right) 
\text{   and } \\ 
c(x;\xi)&=&
   \frac{1}{2}\left| \frac{\exp{\pi x}-1}{\exp{\pi x}+\exp{\pi \xi} }
  -\frac{\exp{\pi x}+1}{\exp{\pi x}-\exp{\pi \xi} }\right| . 
\end{eqnarray*}
We can now appeal to equation \eqref{LSWeq1}.  The map 
$\Phi_A$ in \eqref{LSWeq1} is given by
\begin{equation*}
\Phi_A(z)=\frac{(c-a)^2}{a}\left(\frac{\left(\frac{c-(z-a)}{c+(z-a)}\right)^2
  -\left(\frac{c+a}{c-a}\right)^2}
  {\left(\frac{c-(z-a)}{c+(z-a)}\right)^2-1}\right).
\end{equation*}
where $a=a(x;\xi), c=c(x;\xi)$. 

Evaluating the derivative at $z=0$ gives
\[
\lim_{y \ra \infty} \Q^y \left( 
\max_j \frac{\Re \, \omega_j}{y} 
\le x \right) = \int_{-\infty}^x
\left[1-\left(\frac{c(x;\xi)}{a(x;\xi)}\right)^2\right]^{5/8} 
\rho(\xi) \, d\xi.
\]
\begin{figure}[tbh]
\includegraphics{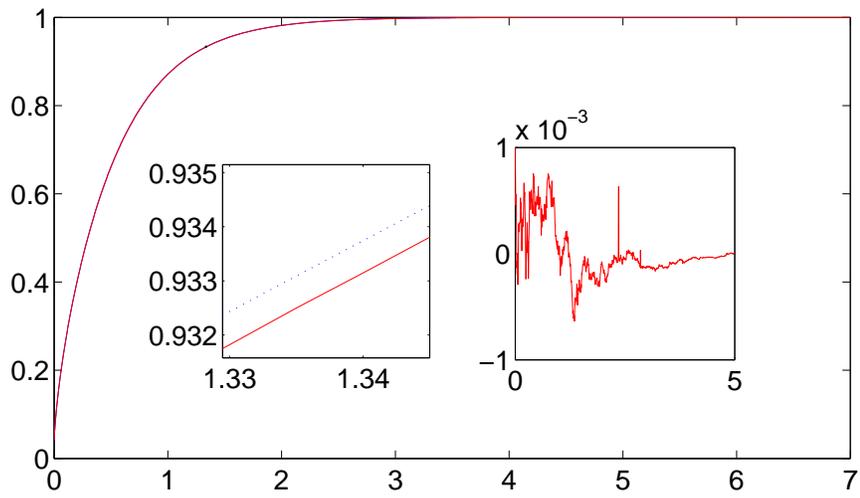}
\caption{\leftskip=25 pt \rightskip= 25 pt 
Comparison of simulation of right most excursion of the SAW in a strip and 
the conjectured distribution function using SLE. 
The simulation distribution is the dashed line.
}
\label{right_most}
\end{figure}

Figure \ref{right_most} shows the comparison of this prediction and 
our simulations. The two curves are indistinguishable on the scale 
of the main figure. The left inset shows a blow up of a small section 
where they deviate the most. (The dashed line is the 
simulation distribution.)
The right inset shows a plot of the difference
of the two curves. This difference is less than $1/10$ of a percent. 

\section{Conclusions}

We have reviewed the construction of the probability measure on 
infinite SAW's in the half plane as an i.i.d. sequence of irreducible 
bridges and the proof that this measure is the weak limit as 
$N \ra \infty$ of the uniform probability measure on SAW's with $N$ steps. 
We have shown that the measure on infinite length SAW's is the 
weak limit as $\beta \ra \beta_c-$ of the probability 
measure on all finite length SAW's in which the probability of 
a SAW $\omega$ is proportional to $\beta^{|\omega|}$.

We have considered the SAW in a strip of height $y$ which starts at the 
origin and ends anywhere on the upper boundary. 
The probability measure in which the probability of $\omega$ is proportional to 
$\beta_c^{|\omega|}$ can also be obtained by taking the SAW in the half plane 
and conditioning on the event that $y$ is a bridge height for the SAW.  

Using this relationship we have carried out simulations of the SAW 
in the strip and found good agreement with the
conjecture of Lawler, Schramm and Werner for the density of 
the endpoint of the SAW along the upper boundary. This is the 
first test of their prediction for such boundary densities. 
Our simulations have also tested the conjecture that the scaling 
limit of the SAW is SLE$_{8/3}$ for the strip and found good agreement.  

Our simulations have provided the first test of the conjectured 
conformal covariance of the boundary density, but the strip is a
rather special test case since the boundary is always parallel to a
lattice direction. It is expected that even in the scaling limit
there will be lattice effects that must be taken into account 
for other domains. This merits further study. 

\clearpage

\end{document}